\newcommand{\argmax}{\operatornamewithlimits{argmax}}
\newcommand{\cG}{\mathcal{G}}   
\newcommand{\cN}{\mathcal{N}}   
\newcommand{\cL}{\mathcal{L}}   
\newcommand{\cD}{\mathcal{D}}   
\newcommand{\cM}{\mathcal{M}}
\newcommand{\cR}{\mathcal{R}}
\newcommand{\bE}{\mathbb{E}}
\newcommand{\bR}{\mathbb{R}}
\newcommand{\vx}{\boldsymbol{x}}
\newcommand{\vr}{\boldsymbol{r}}
\newcommand{\vX}{\boldsymbol{X}}
\newcommand{\vR}{\boldsymbol{R}}
\newcommand{\vnu}{\boldsymbol{\nu}}
\newcommand{\vc}{\boldsymbol{c}}
\newcommand{\vepsilon}{\boldsymbol{\epsilon}}
\newcommand{\vP}{\boldsymbol{P}}
\newcommand{\vQ}{\boldsymbol{Q}}
\newcommand{\vM}{\boldsymbol{M}}
\newcommand{\indicator}{\boldsymbol{1}}
\newcommand{\itoj}{\langle i,j\rangle}
\newcommand{\jtoi}{\langle j,i\rangle}
\newtheorem{theorem}{Theorem}
\newtheorem{proposition}{Proposition}
\newtheorem{assumption}{Assumption}
\newtheorem{definition}{Definition}
\newtheorem{lemma}{Lemma}
\begin{document}
\author{{Haozhi~Xiong,~Ruogu~Li,~Atilla~Eryilmaz~and~Eylem~Ekici}
\thanks{Haozhi~Xiong,~Ruogu~Li,~Atilla~Eryilmaz~and~Eylem~Ekici
({\small\{xiongh,~lir,~eryilmaz,~ekici\}@ece.osu.edu}) are with the
Department of Electrical and Computer Engineering at The Ohio State
University, Columbus, Ohio 43210 USA.}}
\title{Delay-Aware Cross-Layer Design for Network Utility Maximization
 in Multi-hop Networks}
\maketitle

\begin{abstract}

We investigate the problem of designing \textit{delay-aware} joint
flow control, routing, and scheduling algorithms in general
multi-hop networks for maximizing network utilization. Since the
end-to-end delay performance has a complex dependence on the
high-order statistics of cross-layer algorithms, earlier
optimization-based design methodologies that optimize the long term
network utilization are not immediately well-suited for delay-aware
design.
This motivates us in this work to develop a novel design framework
and alternative methods that take advantage of several unexploited
design choices in the routing and the scheduling strategy spaces. In
particular, we reveal and exploit a crucial characteristic of back
pressure-type controllers that enables us to develop a novel link
rate allocation strategy that not only \emph{optimizes long-term
network utilization}, but also yields \emph{loop free multi-path
routes} between each source-destination pair. Moreover, we propose a
regulated scheduling strategy, based on a token-based service
discipline, for shaping the per-hop delay distribution to obtain
highly \emph{desirable end-to-end delay performance}. We establish
that our joint flow control, routing, and scheduling algorithm
achieves loop-free routes and optimal network utilization. Our
extensive numerical studies support our theoretical results, and
further show that our joint design leads to substantial end-to-end
delay performance improvements in multi-hop networks compared to
earlier solutions.

\end{abstract}

\section{Introduction}

Communication networks are expected to serve a variety of essential
applications that demand high long-term throughput and low
end-to-end delay.
%
%
%
Over the last decade, we have witnessed the development of
increasingly sophisticated optimization and control techniques
targeting flow control, routing and scheduling components to address
the cross-layer resource allocation problems for communication
networks.
Among various different developed policies, an important class of
\emph{throughput-optimal policies} has evolved from the seminal work
\cite{taseph92} of Tassiulas and Ephremides, where they proposed the
well-known back-pressure scheduling/routing policy. This policy
utilizes properly maintained queue-length information to dynamically
determine scheduling and routing decisions that optimize the
long-term maximal throughput levels between all source-destination
pairs. More recent works (e.g. \cite{linshr04, sto05, neemodli05,
erysri06a}; also see \cite{linshrsri06, geoneetas06, shasri07} and
references therein) extended this framework by developing an
optimization-based design methodology for the development of joint
flow control, routing, and scheduling algorithms to maximize the
long-term utilization of the network resources, measured through
proper functions of throughput.

However, existing works have dominantly concentrated only on the
long-term performance metrics of throughput or utilization, while
ignoring the end-to-end delay metric that is crucial to many
essential applications. This restriction allowed for the formulation
of the design problem as a static optimization problem in terms of
the mean behavior of the network algorithm operation. In particular,
long-term design objectives can easily be described in terms of the
mean flow rates and the mean link rates that the network algorithm
provides to the applications. Unfortunately, this approach is no
longer applicable when end-to-end delay is taken into account as
end-to-end delay is a complex stochastic process that depends on the
higher order statistics of the network algorithm operation. This
calls for a more sophisticated delay-aware cross-layer design
framework, and novel strategies that provide favorable end-to-end
delay performance, while preserving long-term optimality
characteristics.

With this motivation, in this paper, we are interested in the well-founded design of cross-layer network algorithms for general multi-hop networks that not only utilizes the network resources for long-term throughput optimality, but also exhibits desirable end-to-end delay characteristics. Our contributions in this direction can be summarized as follows:


$\bullet$
We propose a novel
design paradigm that decouples the objectives of long-term utility
maximization from the end-to-end delay-aware resource allocation.
This framework is expected to enable the systematic development of
future schemes, in addition to the one we develop in this paper.

$\bullet$ We reveal the solution given by the back-pressure policy
has a unique structure which facilitate us to establish
\emph{loop-free} multi-path routes that \emph{guarantee long-term
network utility maximization}. This new routing-scheme not only
inherits the adaptive and optimal nature of the long-term optimal
algorithms, but also eliminates the unnecessary loops to reduce the
end-to-end delay without sacrificing from throughput.

$\bullet$ We combine this loop-free route construction strategy with
a token-based scheduling discipline that regulates the higher-order
statistics of service processes to achieve drastic reductions in the
end-to-end delay performance, while guaranteeing long-term
optimality characteristics.
%


The above points also constitute the main differences of this work
from the recent efforts in the design of algorithms with low
end-to-end delay performance (e.g. \cite{yinshared09,buisristo09}).
We also note that there has been recent interest in deriving
fundamental bounds on the delay performance
(\cite{gupshr09,karluosar09,nee08}). In our future work, we are
interested in utilizing and extending these results to study the gap
between these fundamental bounds and the delay performance of our
algorithms.

The remainder of the paper is organized as follows. Section~\ref{sec:model} introduces our system model and objectives together with the description of several existing algorithms. In Section~\ref{sec:example}, we propose our delay-aware design framework that describes the desired characteristics and interconnections of the routing, scheduling, and flow control components of the network algorithm. In Section~\ref{sec:routing}, we build upon the proposed design framework to construct a delay-aware cross-layer algorithm that performs the desired tasks. The numerical result of the policy and our concluding remarks are provided in Section~\ref{sec:numerical} and \ref{sec:conclusion}, respectively.

\section{System Model and Objective}\label{sec:model}

In this work, we study wired networks to concentrate on the delay behavior of network algorithms without the additional complications of interference limited wireless communications. Discussion of possible extension of our proposed frameworks to interference limited networks is provided in the conclusions. We consider a fixed multi-hop network represented by graph $\cG=(\cN,\cL,\vc)$, where $\cN$ is the set of nodes, $\cL$ is the set of \emph{bidirectional} links $(i,j)$ where $i,j\in\cN$. We use $\vc=(c_{ij})_{\{(i,j)\in \cL\}}$ to denote the vector of \emph{bidirectional link capacities} in packets per slot, i.e., both $c_{ij}$ and $c_{ji}$ refer to the bidirectional link capacity of the bidirectional link $(i,j)$. For clarity, we use $\itoj$ to denote the \emph{directed} link from node $i$ to node $j$. Time is slotted in our system, and external packets arrive at the beginning of each time slot.


The network resources are to be shared by a set of commodities.
We distinguish different \emph{commodities} by their destinations. We define $\cD$ to be the set of all destination nodes.
In this paper, we are interested in designing joint
flow-control, scheduling, and routing policies with desirable long-term
throughput and short-term delay characteristics. Our dual goal will be
discussed in further detail after we introduce some notations.

In each time slot, the service on the link $\itoj$ of commodity $d$ is denoted by $R_{ij}^d[t]$, which is assumed to be a stationary ergodic stochastic process. It is determined by the scheduling and routing policy. We let $r_{ij}^d=\lim_{t \rightarrow\infty} \frac{1}{t} \sum_{\tau=0}^t \mathbb{E} (R_{ij}^d[\tau])$ to be the link rate of commodity $d$ on link $\itoj$, and define $\vr=(r_{ij}^d)_{i,j,d}$ to be the vector of all such link rates. Under flow control mechanism, the number of the exogenous packets that arrives at node $s$ destined to node $d$ at time slot $t$ is denoted by $X_s^d[t]$, which is also assumed to be stationary and ergodic. Similarly, $x_s^d=\lim_{t \rightarrow\infty} \frac{1}{t} \sum_{\tau=0}^t  \mathbb{E} (X_{s}^d[\tau])$ is the corresponding rate, and we let $\vx=(x_s^d)_{s,d}$ to be exogenous arrival rate vector. A utility function $U_{sd}(x_s^d)$ is associated with each source-destination pair $(s,d)$. We make the following typical assumption on the utility function:
\begin{assumption}
The utility functions $\{U_{sd}(x_s^d)\}_{s,d}$ are strictly concave, twice
differentiable and increasing functions.\label{ass:concave}\footnote{This is
not a critical assumption, but will make our analysis clear.}
\end{assumption}

\subsection{Objective}

Our goal is to develop a joint flow-control, scheduling, and routing algorithm
that not only optimizes long-term network utilization, but also provides
desirable delay characteristics. We discuss these two goals next.

\noindent \textbf{Utility Maximization: } For the network $(\cN,\cL,\vc)$, the
network utilization maximization optimization problem is defined as:
\begin{eqnarray}
&\displaystyle \max_{\{\boldsymbol{X}[t],\boldsymbol{R}[t]\}} &
  \sum_{s,d}U_{sd}(x_s^d)\label{eqn:stochasticBP}\\
  & s.t. & X_s^d[t]\geq 0,\,\forall s,d\in\cN,\,\forall t\geq 0, \label{eqn:BPcondition1}\\
  && R_{ij}^d[t]\geq 0,\,\forall (i,j)\in\cL,\,\forall t\geq 0,\label{eqn:BPcondition2}\\
  && \displaystyle\sum_{d}R_{ij}^d[t] + \sum_{d}R_{ji}^d[t]\leq c_{ij},\nonumber\\
  &&\,\forall (i,j)\in\cL,\,d\in\cD,\,\forall t\geq 0,\label{eqn:BPcondition3}\\
  && \displaystyle x_i^d+\sum_{\langle m,i\rangle\in\cL} r_{mi}^d\leq\sum_{\itoj\in\cL}
  r_{ij}^d,\nonumber\\
  &&\mbox{    }\forall i\in\cN,\,\forall d\in\cD,\,i\neq d.\label{eqn:BPcondition4}
\end{eqnarray}

We define the \emph{feasible} solution as:
\begin{definition}
(Feasible solution) A solution $(\boldsymbol{X}[t],\boldsymbol{R}[t])$ is
\emph{feasible} if it satisfies conditions (\ref{eqn:BPcondition1}) to
(\ref{eqn:BPcondition4}).\label{def:feasible} \hfill $\diamond$
\end{definition}

We use $\{\boldsymbol{X}^*[t],\boldsymbol{R}^*[t]\}$ to denote the optimal
solution to (\ref{eqn:stochasticBP}), and we define
\begin{eqnarray}
\vx^*&=&\lim_{t\rightarrow\infty}\frac{1}{t}\sum_{\tau=0}^{t-1}\boldsymbol{X}^*[\tau],\label{eqn:optimalx}\\
\vr^*&=&\lim_{t\rightarrow\infty}\frac{1}{t}\sum_{\tau=0}^{t-1}\boldsymbol{R}^*[\tau].\label{eqn:optimalr}
\end{eqnarray}
Note that under Assumption~\ref{ass:concave}, $\vx^*$ is unique while $\vr^*$
generally belongs to a set of optimal link rates denoted by $\cR^*$.

In this paper, we define the stability as follows:
\begin{definition}(Network stability)\label{def:stability}
We say the network is \textit{stable} in the mean sense if for any queue (price) $P_{nj}^d$, $\forall n,j\in\cN$, $\forall d\in\cN$, we have $\limsup_{t\rightarrow\infty}\frac{1}{t}\sum_{\tau=0}^{t-1}\bE \Bigl[P_{nj}^d[\tau]\Bigr]<\infty.\quad\diamond$
\end{definition}

\noindent \textbf{Delay Improvement: }A step beyond just solving the above
optimization, our second goal is to develop a new mechanism that reduces the
end-to-end delay experienced by the traffic while maintains the utility
maximizing nature. The end-to-end delay experienced by one packet is defined as
the difference between the time instance of injection at the source and
reception at the destination, which is a short-term metric instead of a
long-term throughput. We also aim to find a new architecture that can decouple
the different objectives.

\subsection{Background}\label{sec:background}
The back-pressure algorithm maintains a queue for each commodity $d$ in each node $i$ whose length at time $t$ is denoted by $P_i^d[t]$. The algorithm is proposed for a general network model, and specifically, the back-pressure algorithm under our system model is given by \vspace{1ex} \hrule \vspace{1ex} \textbf{The Discrete Time Back-Pressure (DTBP)
Policy}~(\cite{Neely03,erysri06a})
\begin{itemize}
\item \textbf{Queue (Price) Evolution:}\\
Each queue evolves as
\begin{equation}
\scriptsize P_i^d[t+1] = \left(P_i^d[t]- \sum_{\langle
i,j\rangle\in\cL}R_{ij}^d[t]\right)^+ +X_i^d[t]+\sum_{\langle m,i\rangle\in\cL}
R_{mi}^d[t]\label{eqn:priceEvolution}
\end{equation}
\item \textbf{Rate control:}\\
In time slot $t$, the flow controller determines the mean of the injection as $$\mathbb{E}(X_s^d[t]\mid P_s^d[t]) = \min\{U_{sd}^{'-1}(P_s^d[t]/K),X_{max}\},$$
where $X_{max}$ is some finite upper bound for the arrival rate.
\item \textbf{Scheduling and Routing:}\\
Each link $l=(i,j)$ selects
\begin{equation}
d_l^*[t]=\argmax_{d}
\left|P_i^d[t]-P_j^d[t]\right|,\,l\in\cL,\label{eqn:dtbprouting}
\end{equation}
and assign
\begin{eqnarray}
\scriptsize R_{ij}^d[t] = \left\{
\begin{array}{ll}
    c_{ij},& \hbox{if }d=d_l^*[t]\,\hbox{and}\,
    P_i^{d}[t]-P_j^{d}[t]>0;\\
    0,& \hbox{otherwise}.
\end{array}\right.\label{eqn:rateAssign}
\end{eqnarray} In the event of multiple commodities satisfying
Equation~(\ref{eqn:dtbprouting}), we arbitrarily choose one such commodity with equal probability as a \emph{tie-breaking rule}.

\end{itemize}
\vspace{1ex} \hrule \vspace{1ex}

The constant $K$ in the algorithm is a design parameter that determines how
close the algorithm can converge to its optimal solution (see
\cite{Neely03,erysri06a}). Also we shall see in Section~\ref{sec:routing} this solution
is related to dual decomposition.



\emph{Remark 1: }In our model, since there is no interference between links, the back-pressure algorithm reduces to making decisions on each link independently. Since each link can only transmit in one direction at a given time slot, it is necessary to use the weight defined in Equation~(\ref{eqn:dtbprouting}) to determine which one of the directional links should transmit.

\emph{Remark 2: }Note that the rate assignment in Equation (\ref{eqn:rateAssign}) implies that we only allow transmission on those links with \emph{strictly positive} maximum back-pressure. This does not affect the optimality of the DTBP algorithm.

\emph{Remark 3: }The price evolution in Equation (\ref{eqn:priceEvolution}) implies that in the case of a node has less packets than scheduled service, dummy packets are transmitted. Thus the allocated service $R_{ij}^d[t]$ always \emph{equals} to number of packets transmitted over link $\itoj$ at time $t$ as suggested in Equation (\ref{eqn:priceEvolution}).

There are many follow up works of the back-pressure algorithm. In particular, in \cite{buisristo09}, an interesting \emph{min-resource algorithm} is proposed based on the back-pressure algorithm. In this implementation, instead of using the queue-length difference $\left|P_i^d[t]-P_j^d[t]\right|$ as the weight of a link, $\left(\left|P_i^d[t]-P_j^d[t]\right|-M\right)$ is used, where $M$ is some positive constant. This modification discourages the use of links unless the queue-length differences are sufficiently high, and hence reduces possible loops in the network.

\section{Delay-Aware Design Framework}\label{sec:example}
In this section, we expose the delay deficiencies of long-term utility
maximizing designs such as DTBP and its variants, both conceptually and through
numerical studies. In particular, we reveal that both the multi-path routing
and the scheduling components of the earlier designs must be significantly
changed or enhanced to obtain favorable end-to-end delay performance. Based on
the observations, we propose a general delay-aware design framework whereby the
utility maximization is combined with delay-aware routing and scheduling
components to achieve our dual objective.

%
%
\subsection{Deficiencies in multi-path routing}\label{sec:loop}

Long-term performance optimizing algorithms share the common characteristic of
continuously searching for routes to maximize the end-to-end throughput of the
flows. However, this potentially causes loops and unnecessarily long routes for
a large subset of the packets, as we shall demonstrate in the following
example.

Consider a $6\times6$ grid with unit capacity links serving two commodities as shown in Fig.~\ref{fig:backpressurepath}. The source-destination node pair for Commodity $1$ and Commodity $2$ are marked with square and circle respectively. Under the operation of DTBP, the hop-count distribution of Commodity $1$ packets converges to the plotted distribution. We can observe that the hop count exceeds the maximum possible loop-free path length of $35$ for a non-negligible fraction of the packets. It turns out that such behavior is typical for similar long-term optimal policies under different setups. In this work, we are interested in preserving the long-term optimality of such solutions, but also eliminating loops and hence significantly improving the delay performance. We will explore the diversity in the set of optimal link rates $\cR^*$ to achieve this.

%
%
\begin{figure}[!ht]
\centering
\includegraphics[width=2.5in]{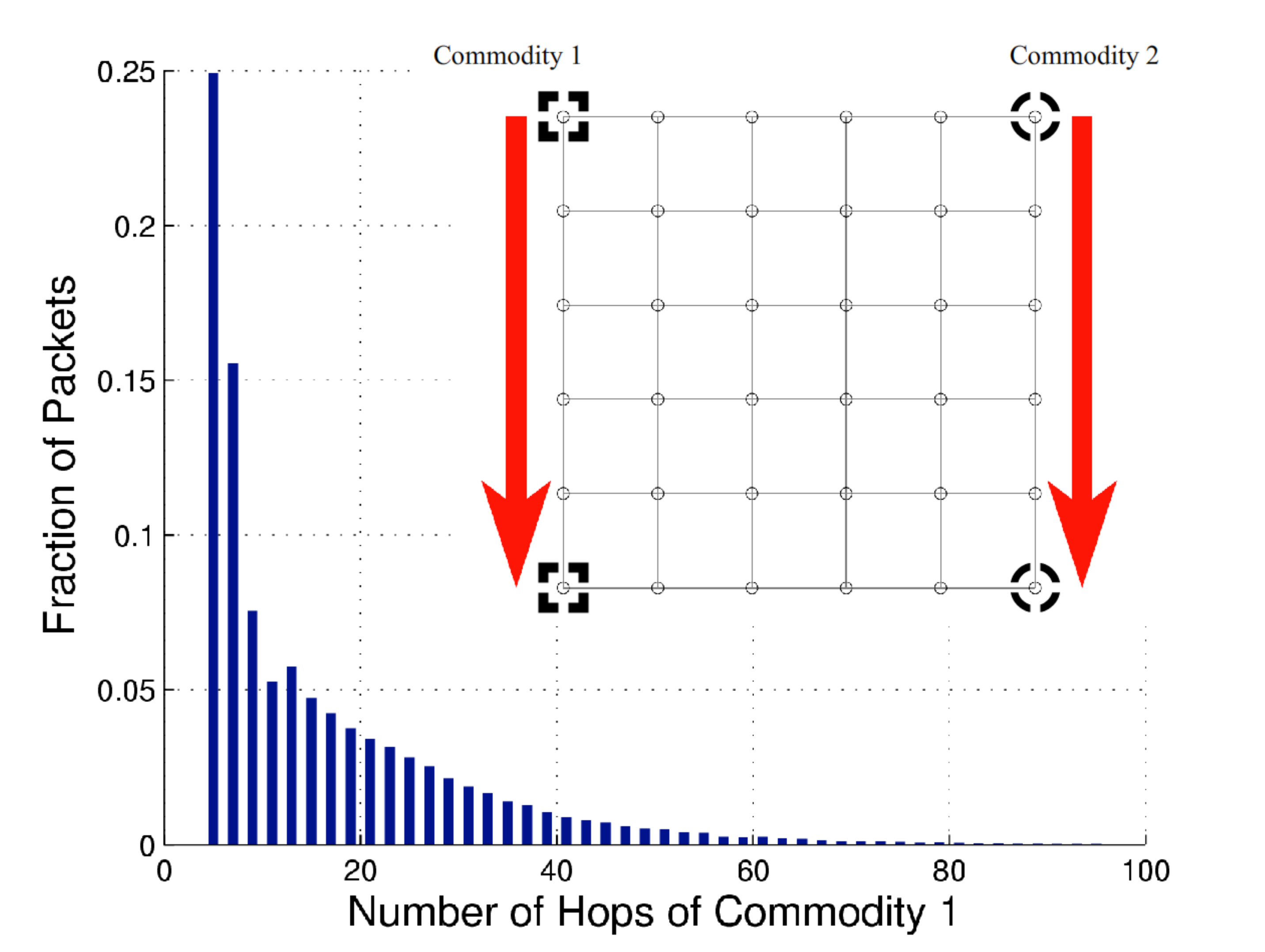}
\caption{The distribution of hop-count for flow 1 packets indicates the
presence of unnecessary loops in their routes} \label{fig:backpressurepath}
\end{figure}

\subsection{Deficiencies in scheduling and flow control}\label{sec:exampleservice}

Long-term utility maximizing policies such as DTBP converge to a set of mean
flow and link rates that solves (\ref{eqn:stochasticBP}). Yet, the end-to-end
delay of these policies is a complex function of higher order moments of the
packet arrival and service processes that are determined by the flow controller
and the scheduler.

As a simple example, it is well-known~(\cite{kle75b}) that in a G/G/1 queueing
system, the mean waiting time $W$ is bounded by
%
%
%
\begin{equation}
W\leq\frac{\sigma_a^2+\sigma_b^2}{2\bar{t}(1-\rho)},\label{eqn:delaybound}
\end{equation}
where \(\sigma_a^2\) is the variance of inter-arrival time, \(\sigma_b^2\) is
the variance of service time, \(\bar{t}\) is the mean of inter-arrival time,
and \(\rho\) is the utility factor. This formulation suggests that reducing the
variance of the arrival and service processes is crucial in reducing the delay.

In a large scale multi-hop network, it is not possible to formulate the
end-to-end delay as a function of the arrival and service processes. However,
motivated by the above observation, in this work, we are interested in
regulating the flow controller and scheduling components of the long-term
optimal design to improve delay performance.

%

\subsection{Proposed multi-layer design framework}\label{sec:architecture}

The previous two subsections expose deficiencies in the routing, scheduling,
and flow control components of existing long-term utility maximizing
algorithms, and reveal new opportunities in the design of delay-aware cross
layer algorithms. These motivate us in this section, to propose a novel
\emph{multi-layer design framework} for multi-purpose policy design.

\begin{figure}[!ht]
\centering
\includegraphics[width=2.5in]{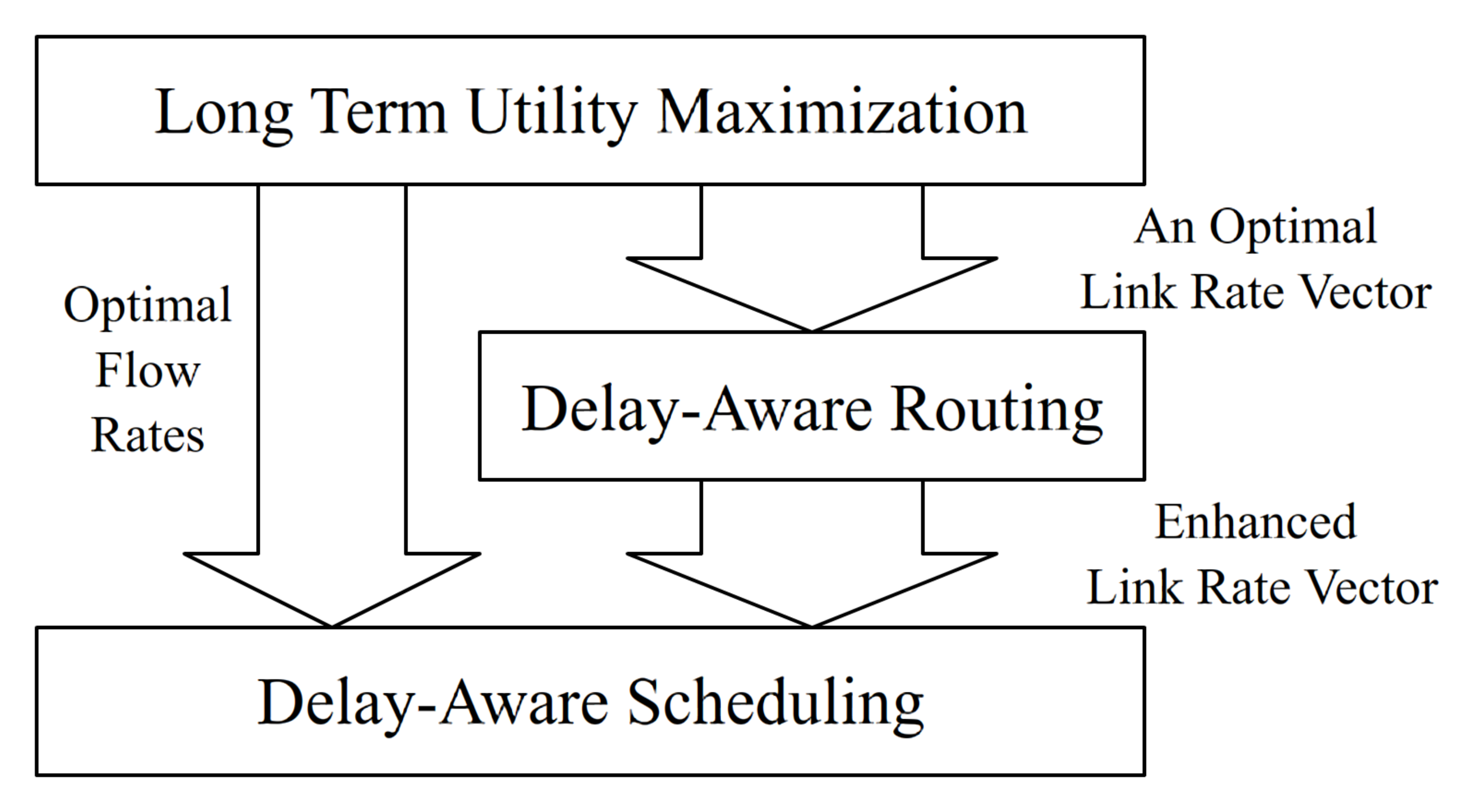}
\caption{Multi-Layer Design Framework} \label{fig:architecture}
\end{figure}

The qualitative structure of our proposed framework is shown in~Fig.~\ref{fig:architecture}, where the goals of long-term optimality, loop-free routing, and regulated flow control and scheduling are decoupled into different layers of the conceptual framework.

In the first layer, we solve the problem of utility maximization. Note that
under Assumption \ref{ass:concave}, the optimal solution for flow rates,
$\vx^*$ is unique, while $\vr^*$ may have multiple solutions. The delay-aware
routing layer exploits the freedom amongst these link-rate solutions and
obtains a set of more desirable link rates that improves delay performance
while preserving the utility maximization. Acquiring the utility maximization
flow rates form the first layer and the delay-aware link-rates form the second
layer, the delay-aware scheduler regulates the arrival and service process to
achieve better delay performance. As such, this establishes a design framework
for developing delay-aware cross-layer algorithms that possess the above
characteristics and hence is expected to be useful for algorithm designs for
diverse network scenarios. In particular, we shall next present specific
algorithms with attractive delay characteristics.


%
%

\section{Delay-aware routing and scheduling}\label{sec:routing}
In this section, we present our delay-aware routing and delay-aware scheduling algorithms for our proposed framework. Our delay-aware routing algorithm preserves utility maximizing properties while avoiding loops. Our delay-aware scheduling algorithm implements a network stabilizing token-based scheme that significantly reduces queue lengths for any feasible rate assignment. We then combine these two schemes in our delay-aware, cross layer congestion control, routing and scheduling policy following our proposed architecture of
Section~\ref{sec:architecture}.

\subsection{Delay-aware route construction}\label{sec:darouting}
To overcome the disadvantages observed in the back-pressure algorithm, a natural idea is to restrict the direction in which packets can be transmitted over a link for a certain commodity. To achieve this, we first introduce a fluid approximation associated with (\ref{eqn:stochasticBP}) for the following discussion.

\begin{eqnarray}
  &\displaystyle \max_{\vx,\vr}& \sum_{s,d}U_{sd}(x_s^d)\label{eqn:objectproblem}\\
  &s.t. & x_s^d\geq 0,\,\forall s,d\in\cN,\label{eqn:xpositive}\\
  && r_{ij}^d\geq 0,\,\forall (i,j)\in\cL,\label{eqn:rpositive}\\
  && \displaystyle\sum_{d}r_{ij}^d + \sum_{d}r_{ji}^d\leq c_{ij},\,\forall
  (i,j)\in\cL,\,d\in\cD,
  \label{eqn:capacityconstraint}\\
  && \displaystyle x_i^d+\sum_{\langle m,i\rangle\in\cL} r_{mi}^d\leq\sum_{\itoj\in\cL}
  r_{ij}^d,\nonumber\\
  && \mbox{    }\forall i\in\cN,\,\forall d\in\cD,\,i\neq d.\label{eqn:flowconservation}
\end{eqnarray}

Note that if an optimal solution of (\ref{eqn:stochasticBP}) is $\{\boldsymbol{X}^*[t],\boldsymbol{R}^*[t]\}$, it is shown in previous works (e.g. \cite{Neely03,erysri06a}) that the associated $(\vx^*,\vr^*)$ defined in (\ref{eqn:optimalx}) and (\ref{eqn:optimalr}) is an optimal solution of (\ref{eqn:objectproblem}) \footnote{More precisely, $(\vx^*,\vr^*)$ is within $O(1/K)$ of the optimal solution of (\ref{eqn:objectproblem}). Nevertheless, we still call $(\vx^*,\vr^*)$ an optimal solution of (\ref{eqn:objectproblem}) since the converge can be precise if a diminishing step-size (in our notation, if $K$ grows linearly with time $t$) is used.}.

\textbf{An Alternate Optimal Solution: }
To ensure the optimality of the proposed algorithm, we explore the solution space for an alternate optimal solution with a particular structure. First, we define the following mapping of the link rates:
\begin{definition}(Delay-aware link rate mapping) The delay-aware link rate
mapping $\hat{R}_{ij}^d[t]$ on the directed link $\itoj$ for commodity $d$ at
time $t$ is defined as
\begin{eqnarray}
\hat{R}_{ij}^{d}[t] = \frac{1}{t} \left(\sum_{\tau=0}^{t-1}\left(
R_{ij}^{d}[\tau]-R_{ji}^{d}[\tau]\right)\right)^+, \label{eqn:mapping}
\end{eqnarray}
where \(d\in\cN\), \((i,j)\in\cL\) and \((Z)^+=\max(0,Z)\). Note that this
measures the running average of the \emph{net rate} of commodity $d$ traffic
traversing link $\itoj$.\hfill$\diamond$
\end{definition}


Suppose we use $R_{ij}^d[t]=R_{ij}^{*d}[t]$ in (\ref{eqn:mapping}). Let
$\hat{r}_{ij}^{*d}=\lim_{t\rightarrow\infty}\hat{R}_{ij}^{d}[t]$ and $\hat{\vr}^*=(\hat{\vr}_{ij}^{*d})_{i,j,d}$ then $\hat{
\vr}^*$ is related to $\vr^*$ as:
\begin{eqnarray}
\hat{r}_{ij}^{*d} &=& \left(r_{ij}^{*d}-r_{ji}^{*d}\right)^+,
\label{eqn:r_ij_hat}\\
\hat{r}_{ji}^{*d} &=& \left(r_{ji}^{*d}-r_{ij}^{*d}\right)^+
\label{eqn:r_ji_hat}.
\end{eqnarray}


We shall show that the link rate mapping (\ref{eqn:r_ij_hat}) and
(\ref{eqn:r_ji_hat}) preserves flow rate optimality.

\begin{proposition}
If $(\vx^*,\vr^*)$ is an optimal solution to (\ref{eqn:objectproblem}), then $(\hat{\vx}^*,\hat{\vr}^*)$ with $\hat{\vx}^*=\vx^*$ and $\hat{\vr}^*$ being as defined in Equations (\ref{eqn:r_ij_hat}) and (\ref{eqn:r_ji_hat}) is also an optimal solution.\label{thm:optimal}
\end{proposition}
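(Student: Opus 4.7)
The plan is to show that $(\hat{\vx}^*, \hat{\vr}^*)$ is feasible for the fluid problem (\ref{eqn:objectproblem}); since $\hat{\vx}^* = \vx^*$, the objective value $\sum_{s,d} U_{sd}(\hat{x}_s^{*d})$ is trivially equal to the optimum, so feasibility alone implies optimality. Concretely, I would verify each of the constraints (\ref{eqn:xpositive}), (\ref{eqn:rpositive}), (\ref{eqn:capacityconstraint}), and (\ref{eqn:flowconservation}) for the mapped solution in turn.

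The first three are nearly immediate. Non-negativity of $\hat{x}_s^{*d}$ follows from that of $\vx^*$, and non-negativity of $\hat{r}_{ij}^{*d}$ is built into the $(\cdot)^+$ in Equations~(\ref{eqn:r_ij_hat})--(\ref{eqn:r_ji_hat}). For the bidirectional capacity constraint, I would use that at most one of $r_{ij}^{*d} - r_{ji}^{*d}$ and $r_{ji}^{*d} - r_{ij}^{*d}$ is positive, hence
\[
\hat{r}_{ij}^{*d} + \hat{r}_{ji}^{*d} = \bigl|r_{ij}^{*d} - r_{ji}^{*d}\bigr| \le r_{ij}^{*d} + r_{ji}^{*d}.
\]
Summing over $d$ and invoking (\ref{eqn:capacityconstraint}) for $(\vx^*,\vr^*)$ delivers the capacity constraint for the hatted rates, in fact with slack whenever the original solution carries opposing flows on some link --- and that slack is precisely what later enables loop elimination.

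The one substantive step is flow conservation. The key algebraic identity is $(a-b)^+ - (b-a)^+ = a-b$, valid for all real $a,b$, which yields
\[
\hat{r}_{ij}^{*d} - \hat{r}_{ji}^{*d} = r_{ij}^{*d} - r_{ji}^{*d}.
\]
Summing over all neighbors $j$ of $i$ and using that $\cL$ is bidirectional (so $\itoj \in \cL$ iff $\langle j,i\rangle \in \cL$), the net divergence of commodity-$d$ flow at node $i$ is preserved:
\[
\sum_{\itoj \in \cL} \hat{r}_{ij}^{*d} - \sum_{\langle m,i\rangle \in \cL} \hat{r}_{mi}^{*d} = \sum_{\itoj \in \cL} r_{ij}^{*d} - \sum_{\langle m,i\rangle \in \cL} r_{mi}^{*d}.
\]
Combining with (\ref{eqn:flowconservation}) for $(\vx^*,\vr^*)$ and with $\hat{x}_i^{*d} = x_i^{*d}$ yields (\ref{eqn:flowconservation}) for the hatted solution, completing the verification.

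There is no real obstacle: once the identity $(a-b)^+ - (b-a)^+ = a-b$ is in hand, the argument is mechanical. The conceptual content --- and the reason this proposition is worth stating --- is that the mapping preserves \emph{net} flow on every bidirectional link while strictly reducing gross traffic whenever both $r_{ij}^{*d}$ and $r_{ji}^{*d}$ are positive. Thus the cancellation on each link is exactly what the subsequent delay-aware routing layer will exploit to eliminate loops without sacrificing throughput or utility.
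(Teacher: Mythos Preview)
Your proposal is correct and follows essentially the same route as the paper's proof: reduce optimality to feasibility (since $\hat{\vx}^*=\vx^*$), then verify the four constraints in turn, with the only nontrivial step being flow conservation via the identity $\hat{r}_{ij}^{*d}-\hat{r}_{ji}^{*d}=r_{ij}^{*d}-r_{ji}^{*d}$. The paper derives that identity through $(Z)^+=\tfrac{1}{2}(Z+|Z|)$ and handles the capacity constraint via the per-direction bound $\hat{r}_{ij}^{*d}\le r_{ij}^{*d}$ rather than your summed form, but these are cosmetic differences.
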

\begin{proof}
The detailed proof is provided in Appendix~\ref{sec:optimal}.
\end{proof}

\emph{Remark: } Note that under this link rate assignment, for each $(i,j)\in\cL$, at least one of $\hat{r}_{ij}^{*d}$ and $\hat{r}_{ji}^{*d}$ equals to zero as if the corresponding bidirectional link becomes a unidirectional link. Clearly, those links with equal rates on both directions will have a net rate of $0$. Also, note that this link rate mapping preserves the optimality of $(\hat{\vx}^*,\hat{\vr}^*)$ when applied to \emph{any} optimal solution $(\vx^*,\vr^*)$ to problem (\ref{eqn:stochasticBP}) which is not necessarily given by the DTBP algorithm.


\textbf{The Steady-State Behavior of the DTBP Algorithm: }
Previous works (e.g. \cite{Neely03}) also show the stability of the DTBP algorithm. In other words, the Markov chain with the queue-lengths $\vP=(P_i^d)_{i,d}$ being its states is positive recurrent and ergodic under the DTBP algorithm. We use $\pi(\vP)$ to denote the steady state distribution under the DTBP algorithm, i.e., the probability of the queue-lengths being $\vP$ is $\pi(\vP)$ after the convergence of the DTBP algorithm and $\sum_{\vP}\pi(\vP)=1$. Also, for queue $P_i^d$, the following holds:
\begin{equation}
\lim_{t\rightarrow\infty} \frac{1}{t} \sum_{\tau=0}^t  \mathbb{E} (P_{i}^d[\tau])=\sum_{\vP}P_i^d\pi(\vP),
\end{equation}
and we use
\begin{equation}
\bar{P}_i^{*d}=\sum_{\vP}P_i^d\pi(\vP)\label{eqn:averageQueue}
\end{equation} to denote the optimal average queue-length of queue $P_i^d$.

Intuitively, the link rate assignment equation (\ref{eqn:rateAssign}) of the DTBP algorithm implies that if there is a positive net flow rate from node $i$ to node $j$ for destination $d$, then the average queue-lengths should be \emph{strictly decreasing}, i.e., $\bar{P}_i^{*d}>\bar{P}_j^{*d}$, since $i$ can send a packet to $j$ only when $P_i^d[t]>P_j^d[t]$. In certain simple topologies, this statement can be proven rigorously.
For a tandem network (a network whose adjacency matrix has non-zero elements only on the first diagonal above the main diagonal and the first diagonal below the main diagonal, see Fig.~\ref{fig:tandemNetwork} for an example) with unit link capacities and one end-to-end flow, we have the following results:
\begin{lemma}
In such a tandem network with no more than 3 hops, the average queue-length of the DTBP algorithm is strictly decreasing from the source to the destination for any arrival rate $a\in(0,1)$ under a Bernoulli arrival process.\label{lmm:3hops}
\end{lemma}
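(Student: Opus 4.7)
The overall plan is to exactly solve the DTBP Markov chain on the tandem network for $n\le 3$ hops. Since the DTBP policy is known to be stable on any network with arrival rate in the interior of the stability region, the vector of queue lengths induces a positive recurrent Markov chain on a countable state space with a unique stationary distribution $\pi$, and $\bar P_i^{*d}$ equals its mean. I would attack the problem by (i) establishing a priori bounds that restrict the reachable state space to a finite set, (ii) enumerating the transitions under Equations~(\ref{eqn:priceEvolution})--(\ref{eqn:rateAssign}), (iii) solving the resulting linear balance system parametrically in $a$, and (iv) verifying the resulting strict inequalities algebraically on $(0,1)$.

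For the 1-hop case, the cancellation between a Bernoulli($a$) arrival and the unit-capacity service forces $P_1\in\{0,1\}$, and the one-line balance relation gives $\bar P_1^{*}=a>0=\bar P_2^{*}$. For the 2-hop case, I would first prove that $P_2[t]\le 1$, using the fact that the only input to node $2$ is link $\langle 1,2\rangle$ at rate at most one, together with the DTBP rule that drains node $2$ whenever $P_2>0$; a similar argument using (\ref{eqn:rateAssign}) then gives $P_1[t]\le 2$, since $P_1$ can only grow beyond $P_2+1$ when link $\langle 1,2\rangle$ is idle, which requires $P_1\le P_2$. This restricts us to at most six reachable states whose transitions--including the reverse-direction service when $P_2>P_1$ and the dummy-packet convention of Remark~3--can be enumerated directly. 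Solving the resulting linear system produces $\bar P_1^{*}-\bar P_2^{*}$ as a rational function of $a$ whose numerator factors into strictly positive terms on $(0,1)$.

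For the 3-hop case, I would apply the same scheme to the chain $(P_1,P_2,P_3)$. The new phenomenon to handle is that node $2$ can simultaneously receive a packet from link $\langle 1,2\rangle$ (forward) and a packet from link $\langle 3,2\rangle$ (reverse), which happens precisely when $P_3>P_2=0$ and $P_1>P_2$. The a priori bound therefore becomes $P_2[t]\le 2$, the reachable state space enlarges, and many additional cases arise from the combined DTBP direction choices at the three links. Once the balance equations are written down, the system can again be solved symbolically in $a$, and the two target inequalities $\bar P_1^{*}>\bar P_2^{*}$ and $\bar P_2^{*}>\bar P_3^{*}$ reduce to checking the sign of explicit rational functions on $(0,1)$.

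The main obstacle is the combinatorial growth of cases in the 3-hop setting. Each joint state induces a distinct combination of three independent direction choices under (\ref{eqn:rateAssign}), and the empty-queue boundary interacts with the dummy-packet rule of Remark~3 asymmetrically, so each case must be tracked by hand. Once the closed forms are obtained, the algebraic verification is routine but cluttered. I do not see a monotone-coupling or Lyapunov-drift shortcut that bypasses this explicit enumeration, which plausibly explains why the lemma is restricted to tandems of at most three hops.
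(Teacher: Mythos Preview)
Your proposal is correct and follows essentially the same approach as the paper: both reduce to explicitly enumerating the finite reachable state space of the queue-length Markov chain (the paper lists six states for the two-hop case and fifteen for the three-hop case, matching the bounds you derive), writing down and solving the balance equations parametrically in $a$, and then verifying the strict ordering of the resulting mean queue lengths on $(0,1)$. Your explicit justification of the a priori state-space bounds and your careful tracking of the reverse-direction and dummy-packet cases are a welcome bit of rigor that the paper leaves implicit, but the method is the same.
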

\begin{proof}
The detailed proof is provided in Appendix \ref{sec:3hops}.
\end{proof}

\begin{figure}[!h]
\begin{center}
\includegraphics[width=2.5in]{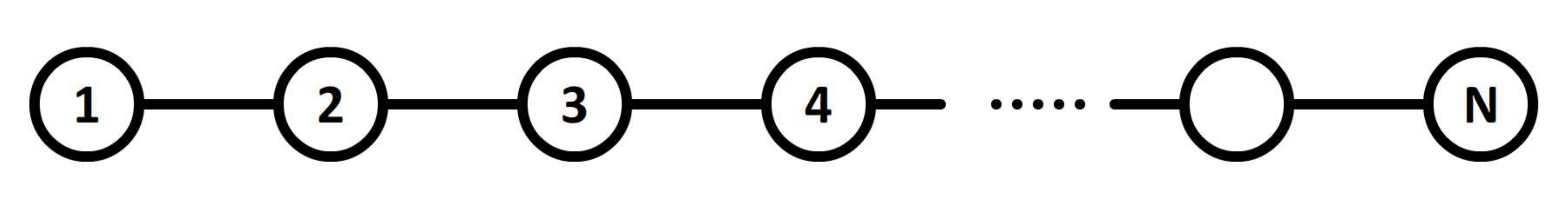}
\caption{A tandem network with $N-1$ hops.} \label{fig:tandemNetwork}
\end{center}
\end{figure}


The method used in the proof of Lemma \ref{lmm:3hops} can be extended to a $k$-hop tandem network with a Bernoulli arrival. However the analysis becomes cumbersome as $k$ increases. Thus we take a different approach to extend the above result to a more general $k$-hop tandem network.

\begin{lemma}
In a tandem network under our model with unit link capacities and one end-to-end flow with a Bernoulli arrival with rate $a\in(\frac{1}{2},1)$, the average queue-length $\bar{P}_i^{*d}$ is strictly decreasing from the source to the destination under DTBP.\label{lmm:linearNetwork}
\end{lemma}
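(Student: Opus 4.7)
The plan is to analyze the stationary distribution of the DTBP Markov chain $\{\vP[t]\}$ on the $N$-hop tandem and extract strict monotonicity of the means $\bar{P}_i^{*d}$ from two complementary facts: a flow-balance lower bound on the forward bias of each link, and a drift-based upper bound on the backflow deficit. I would work entirely in steady state.

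First, applying $\bE[P_i[t+1]-P_i[t]]=0$ to the queue evolution (\ref{eqn:priceEvolution}) at each node gives per-node flow balance. Introducing the dummy-service rate $\mathrm{Loss}_i[t]=(R_{i,\mathrm{out}}[t]-P_i[t])^+$ and the mean net link rate $f_i=\bE[R_{i,i+1}-R_{i+1,i}]$, a routine rearrangement yields $f_i-f_{i-1}=\bE[\mathrm{Loss}_i]$ for interior $i$ and $f_1=a+\bE[\mathrm{Loss}_1]$. Non-negativity of the loss terms therefore gives $f_i\geq a>1/2$ on every link. Under DTBP, the instantaneous net rate on link $(i,i+1)$ equals $\mathrm{sign}(P_i[t]-P_{i+1}[t])$, so $\Pr(P_i>P_{i+1})-\Pr(P_{i+1}>P_i)\geq a$, and in particular $\bE[(P_i-P_{i+1})^+]\geq\Pr(P_i>P_{i+1})\geq a$.

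Second, I would bound the backflow deficit $W_i=(P_{i+1}-P_i)^+$ through a Lyapunov-drift argument. Whenever $W_i>0$, DTBP routes backward on link $(i,i+1)$, raising $P_i$ by $1$ and lowering $P_{i+1}$ by $1$ and thus producing a restoring drift of $-2$ on $P_{i+1}-P_i$ from that link alone. Careful bookkeeping of the activities on the neighboring links $(i-1,i)$ and $(i+1,i+2)$ combined with the uniform bound $f_j\geq a>1/2$ from the first step should show that the net one-step drift of $P_{i+1}-P_i$ is at most $-1$ in the regime $W_i>0$. Standard drift-moment bounds then give $W_i$ a geometric tail uniform in $i$ and $N$, so $\bE[W_i]$ is bounded by an explicit constant depending only on $a$ that is strictly smaller than $a$. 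Combining the two bounds yields the conclusion $\bar P_i^{*d}-\bar P_{i+1}^{*d}=\bE[(P_i-P_{i+1})^+]-\bE[W_i]\geq a-\bE[W_i]>0$.

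The main obstacle is the drift analysis in the second step. The single-step increment of $P_{i+1}-P_i$ depends not only on link $(i,i+1)$ but also on the directions chosen by DTBP on links $(i-1,i)$ and $(i+1,i+2)$, which are themselves governed by adjacent queue differences. Establishing a uniform restoring drift therefore requires ruling out conspiratorial configurations in which unfavorable choices on neighboring links defeat the restoring effect on $(i,i+1)$; this is exactly where $a>1/2$ is essential, as it forces a simultaneous forward bias on every link of the tandem. If the direct drift argument proves too delicate, a natural fallback is backward induction from the destination: $\bar P_{N-1}^{*d}>\bar P_N^{*d}=0$ is immediate from stability, and one would then propagate strict monotonicity one hop upstream by coupling with a chain in which the downstream portion has been replaced by a simpler absorbing structure.
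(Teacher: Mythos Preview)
Your first step is sound and matches the paper: flow balance plus the capacity constraint give $\Pr(P_i>P_{i+1})-\Pr(P_{i+1}>P_i)=f_i\geq a$ and $\Pr(P_{i+1}>P_i)\leq(1-a)/2$ on every link. The gap is entirely in your second step. The one-step conditional drift of $P_{i+1}-P_i$ given $P_{i+1}>P_i$ is \emph{not} bounded by $-1$: link $(i,i+1)$ contributes $-2$, but the two neighboring links can each contribute $+1$ when $P_i>P_{i-1}$ and $P_{i+2}>P_{i+1}$, yielding net drift $0$. Your appeal to $f_j\geq a$ does not help here, because that is an \emph{unconditional} bias; once you condition on the backward event $\{P_{i+1}>P_i\}$, there is no reason the neighboring differences inherit a forward bias, and indeed one would expect positive correlation between adjacent backward events. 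So the geometric-tail bound you need does not follow from the argument as written, and the inductive fallback you sketch has the same problem of controlling conditional behavior at the boundary.

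The paper sidesteps the drift analysis entirely with a much simpler observation you are missing: on a unit-capacity tandem started from the zero state, DTBP keeps $|P_i[t]-P_{i-1}[t]|\leq 3$ \emph{deterministically for all $t$}. The reason is that whenever $P_i-P_{i-1}$ is positive, link $(i-1,i)$ pushes it down by $2$, while the two neighboring links and the arrival can push it up by at most $1$ each, so the difference can never exceed $3$. With this pathwise bound in hand you get directly
\[
\bar P_i-\bar P_{i+1}\;\geq\;1\cdot\Pr(P_i>P_{i+1})-3\cdot\Pr(P_{i+1}>P_i)
\;=\;f_i-2\Pr(P_{i+1}>P_i)\;\geq\;a-(1-a)\;=\;2a-1,
\]
which is strictly positive precisely when $a>\tfrac12$. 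This replaces your entire Lyapunov program with a two-line sample-path argument.
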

\begin{proof}
The detailed proof is provided in Appendix \ref{sec:linearNetwork}.
\end{proof}

Due to the complex interactions in the dynamics of the queue-length in the neighboring nodes in a general network, a rigorous proof of a generalization of Lemma \ref{lmm:linearNetwork} remains an open research problem. However it can be observed from numerical studies of more general networks that, under the DTBP algorithm, the average queue-lengths are strictly decreasing over links with a positive net flow rate. Fig.\ref{fig:netRateAndQueueGrid} shows a typical example in a $6\times 6$ grid network. The arrows in the figure indicate the direction of the net flow, and the average queue-length of the flow in each node is shown by the number besides the node.

\begin{figure}[!h]
\begin{center}
\includegraphics[width=2.5in]{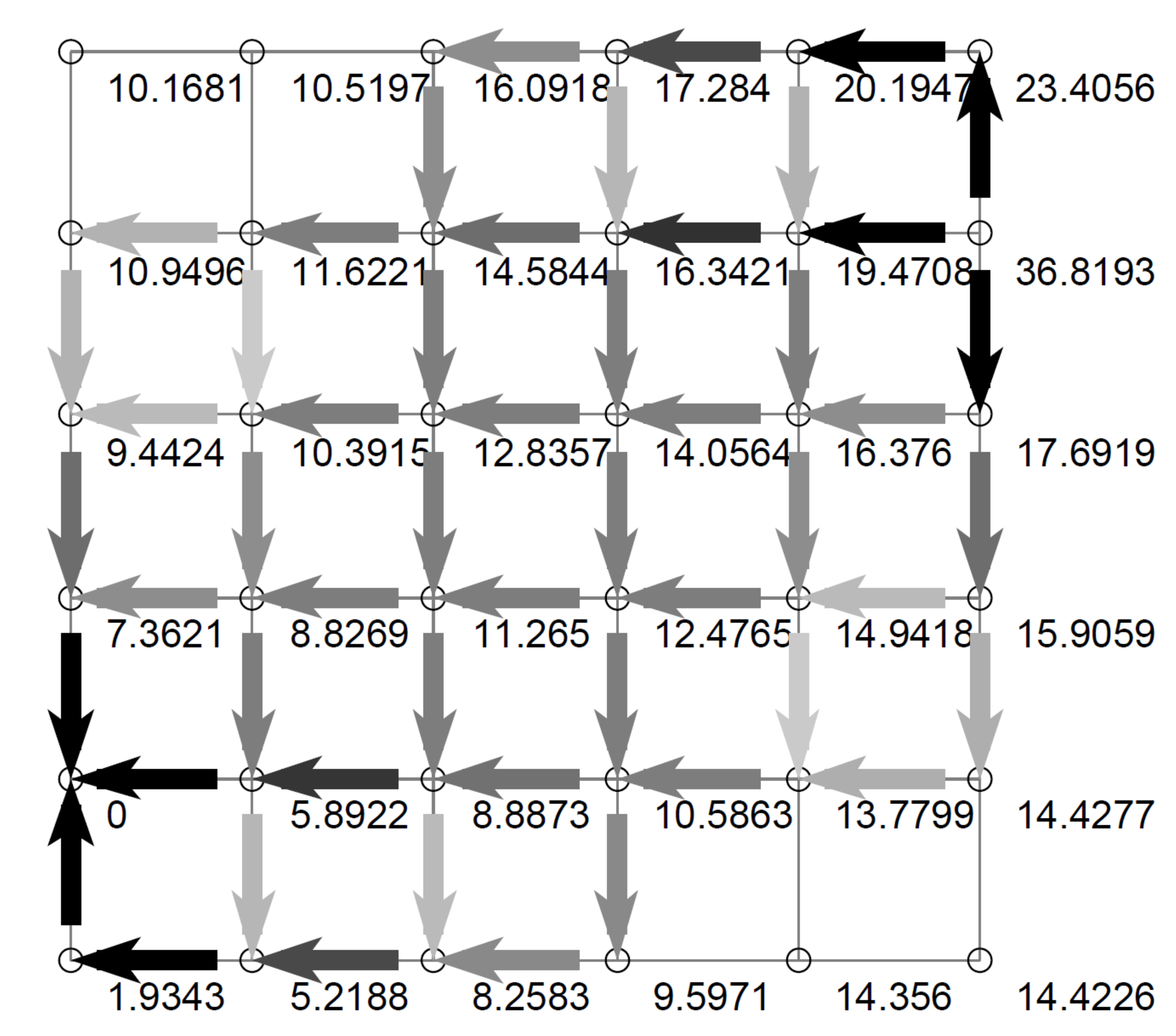}
\caption{The net flow rate and the average queue-length in a $6\times6$ grid network. The average queue-lengths are strictly decreasing in the direction of the net flow.} \label{fig:netRateAndQueueGrid}
\end{center}
\end{figure}

With such results and observations, we present the following assumption on the DTBP algorithm:

\begin{assumption}
Under our system model, the DTBP algorithm converges to a optimal solution of (\ref{eqn:stochasticBP}) $\{\boldsymbol{X}^*[t],\boldsymbol{R}^*[t]\}$ and the corresponding queue evolution $\vP^*[t]$ as defined in (\ref{eqn:priceEvolution}). The optimal solutions satisfy the following property: $$\mbox{If } r_{ij}^{*d}>r_{ji}^{*d} \mbox{ then } \bar{P}_i^{*d}>\bar{P}_j^{*d},\,\forall (i,j)\in\cL,\,d\in\cD,$$ where $r_{ij}^{*d}$ and $r_{ji}^{*d}$ are defined as in (\ref{eqn:optimalr}), and $\bar{P}_i^{*d}$ and $\bar{P}_j^{*d}$ are defined as in (\ref{eqn:averageQueue}).\label{ass:strictlyDecreasing}
\end{assumption}

\emph{Remark: }Note that the converse of Assumption \ref{ass:strictlyDecreasing} is not necessarily true, i.e., $\bar{P}_i^{*d}>\bar{P}_j^{*d}$ does not necessarily imply that $r_{ij}^{*d}>r_{ji}^{*d}$, which can be disproved by counter examples.


\textbf{Loop-Free Route Construction: }
Consider the network $\cG=(\cN,\cL,\vc)$. For each commodity $d$, we define a subgraph $\hat{\cG}^d=(\hat{\cN}^d,\hat{\cL}^d,\hat{\vc}^d)$ as:
\begin{eqnarray*}
\hat{\cN}^d &=& \cN;\\
\hat{\cL}^d &=& \cL\setminus\{\itoj\in\cL:\hat{r}_{ij}^{*d}=0\};\\
\hat{c}_{ij}^d &=& \left\{
 \begin{array}{ll}
 0 & \mbox{if }\itoj\notin\hat{\cL}^d\mbox{ and }\jtoi\notin\hat{\cL}^d;\\
 c_{ij} & \mbox{otherwise}.
 \end{array}
\right.
\end{eqnarray*}
This $\hat{\cG}^d$ is the restricted network topology that commodity $d$ sees after applying the delay-aware link rate assignment.

Then we have the following proposition:
\begin{proposition}
Under Assumption \ref{ass:strictlyDecreasing}, given a network $\cG=(\cN,\cL,\vc)$ and its optimal solution $(\vx^*,\vr^*)$ given by the back-pressure algorithm to problem (\ref{eqn:objectproblem}), the subgraph \(\hat{\cG}^d\) defined as above is loop-free $\forall\,d\in\cD$.
\label{thm:loopfree}
\end{proposition}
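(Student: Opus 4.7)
The plan is to argue by contradiction, exploiting the monotonicity guaranteed by Assumption~\ref{ass:strictlyDecreasing}. The key observation is that the directed links retained in $\hat{\cL}^d$ are exactly those $\itoj$ for which $\hat{r}_{ij}^{*d}>0$, which by the definition of the mapping in Equation~(\ref{eqn:r_ij_hat}) is equivalent to $r_{ij}^{*d}>r_{ji}^{*d}$. So Assumption~\ref{ass:strictlyDecreasing} applies to every edge of $\hat{\cG}^d$ and yields $\bar{P}_i^{*d}>\bar{P}_j^{*d}$ along each such edge.

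First I would fix an arbitrary commodity $d\in\cD$ and suppose, for contradiction, that $\hat{\cG}^d$ contains a directed loop $i_1\to i_2\to\cdots\to i_k\to i_1$ with $k\geq 2$. By the observation above, each consecutive pair $(i_\ell,i_{\ell+1})$ on the cycle satisfies $r_{i_\ell i_{\ell+1}}^{*d}>r_{i_{\ell+1}i_\ell}^{*d}$, hence by Assumption~\ref{ass:strictlyDecreasing} we get $\bar{P}_{i_\ell}^{*d}>\bar{P}_{i_{\ell+1}}^{*d}$. Chaining these strict inequalities around the loop yields
\begin{equation*}
\bar{P}_{i_1}^{*d}>\bar{P}_{i_2}^{*d}>\cdots>\bar{P}_{i_k}^{*d}>\bar{P}_{i_1}^{*d},
\end{equation*}
which is impossible. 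Therefore no such loop can exist, and since $d$ was arbitrary the conclusion holds for every commodity.

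I do not foresee a genuine obstacle here: once Assumption~\ref{ass:strictlyDecreasing} is in hand the argument is essentially a one-line transitivity/contradiction chain on the total order induced by the average queue lengths. The only care needed is to remember that we are working with the directed links of $\hat{\cG}^d$ (where the strict inequality $r_{ij}^{*d}>r_{ji}^{*d}$ holds) rather than with the undirected links of $\cG$; the definition of $\hat{\cL}^d$ guarantees exactly this restriction, so the contradiction step goes through cleanly. The substantive content of the result is really carried by Assumption~\ref{ass:strictlyDecreasing} itself (the strictly decreasing queue-length property along the net-flow direction), which has been motivated via Lemmas~\ref{lmm:3hops} and~\ref{lmm:linearNetwork} and the grid-network numerical evidence earlier in the section.
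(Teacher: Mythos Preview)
Your proposal is correct and follows essentially the same approach as the paper's own proof: assume a directed cycle in $\hat{\cG}^d$, use the definition of $\hat{\cL}^d$ (equivalently $\hat{r}_{ij}^{*d}>0\Leftrightarrow r_{ij}^{*d}>r_{ji}^{*d}$) to invoke Assumption~\ref{ass:strictlyDecreasing} along each edge, and derive the contradictory chain $\bar{P}_{i_1}^{*d}>\cdots>\bar{P}_{i_k}^{*d}>\bar{P}_{i_1}^{*d}$. Your write-up is slightly more explicit in linking the edge-membership condition to the hypothesis of Assumption~\ref{ass:strictlyDecreasing}, but the argument is identical in substance.
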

\begin{proof}
Assume \(\hat{\cG}^d\) has a cycle that is composed of links \(\{(n_{c_1},n_{c_2}),(n_{c_2},n_{c_3}), \cdots,(n_{c_{k-1}},n_{c_k}),(n_{c_k},n_{c_1})\} \subseteq \hat{\cL}^d\). By the definition of subgraph $\hat{\cG}^d$, we can assume without loss of generality that
\(\hat{r}_{c_ic_{i+1}}^{*d}>0\) for \(0\leq i\leq k-1\) and \(\hat{r}_{c_kc_1}^{*d}>0\). Then by Assumption \ref{ass:strictlyDecreasing}, we have $$\bar{P}_{c_1}^{*d}>\bar{P}_{c_2}^{*d}>\cdots >\bar{P}_{c_k}^{*d} >\bar{P}_{c_1}^{*d},$$ which is impossible. Therefore \(\hat{\cG}^d\) is loop-free.
\end{proof}
\emph{Remark: }As mentioned in the proposition itself, the loop-freeness holds only when the delay-aware link rate assignment is applied to the optimal solution \emph{given by} DTBP.

\textbf{Example: }
We use the topology shown in Fig. \ref{fig:triangleExample}(a) as an example to illustrate how the delay-aware link rate assignment works with the optimal solution given by DTBP. Each link in the network is assumed to have unit capacity. Node 1 is the source and Node 3 is the destination.

\begin{figure}[!h]
\begin{center}
\includegraphics[width=3in]{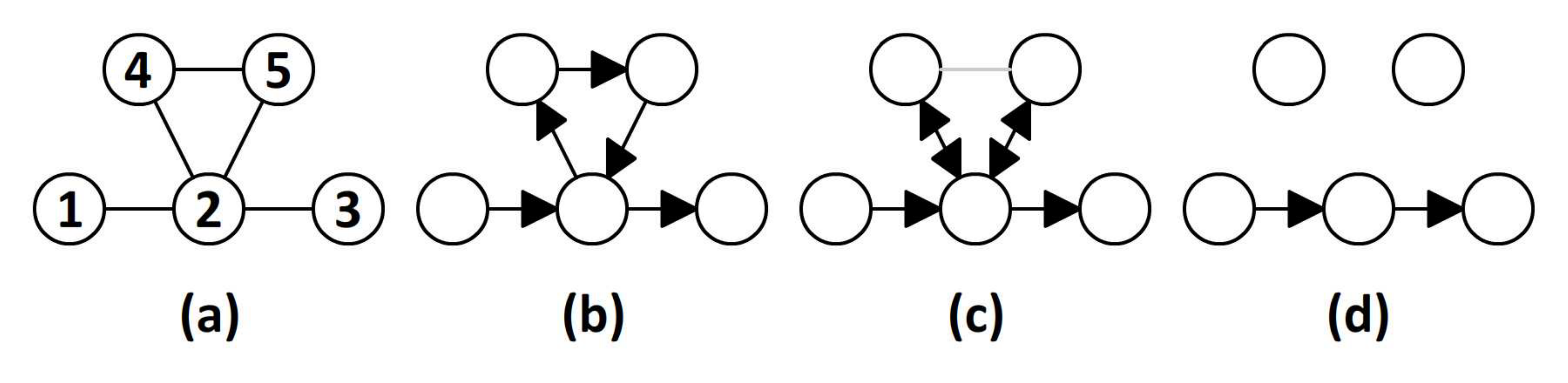}
\caption{An example to illustrate the DTBP solution with delay-aware link rate assignment.} \label{fig:triangleExample}
\end{center}
\end{figure}

Shown in Fig. \ref{fig:triangleExample}(b) is a possible optimal solution of the problem, with $r_{12}=r_{23}=1$ and $r_{24}=r_{45}=r_{52}=r$, where $r\in(0,1]$. Note that there exists a loop $(2\rightarrow4\rightarrow5\rightarrow2)$ in this optimal solution. However, analyzing the corresponding Markov chain shows that the DTBP algorithm can never converge to such an optimal solution. In fact, the DTBP converges to a solution shown in Fig. \ref{fig:triangleExample}(c), where $r_{12}=r_{23}=1$, link $(2,4)$ and $(2,5)$ have equal and non-zero transmission rate in either direction, and there is no transmission on link $(4,5)$. Note that this confirms Assumption \ref{ass:strictlyDecreasing} in this particular topology, and also confirms that the optimal solutions given by DTBP is a subset of all optimal solutions with a special structure.

If we were to apply the delay-aware link rate assignment to the optimal solutions shown in Fig. \ref{fig:triangleExample}(b), it can be verified that the loop $(2\rightarrow4\rightarrow5\rightarrow2)$ still exists in the resulting topology. On the other hand, Fig. \ref{fig:triangleExample}(d) shows the resulting topology when the delay-aware link rate assignment being applied to the optimal solution given by DTBP algorithm, which is a loop-free route.

\subsection{Delay-aware scheduler design}\label{sec:regulation}
Using the delay-aware routing of Section~\ref{sec:darouting}, utility maximization can be achieved while avoiding loops, which leads to lower average end-to-end delays. Here, we seek to improve the delay performance further through delay-aware scheduling at each link. As argued in Section~\ref{sec:exampleservice}, significant per hop delay improvement can be achieved by reshaping the service distribution through regulating the scheduler operation. In previous works~\cite{wusri05}, the notion of regulators were introduced to help stabilize the network with fixed single-path routing. Adopting a similar approach, we propose a delay-aware scheduler with regulated service that stabilizes $(\cN,\cL,\vc)$ when there exists a loop-free $\vR[t]$ such that $\vX[t]$ satisfies $\vx\in int\bigl(\Lambda(\cN,\cL,\vc)\bigr)$. We assume that the arrivals take place after the service in each time slot, i.e., the arrivals cannot be served in the arriving time slot. We further assume that from now on all links have the same link capacity denoted by $c$. 

\begin{figure}[!ht]
\centering
\input{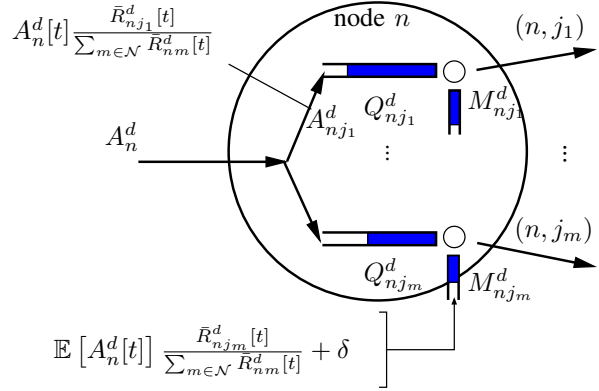}
\caption{Delay-Aware Scheduler Structure}\label{fig:token}
\end{figure}


\begin{definition}(Delay-aware scheduler with regulated service)
The delay-aware scheduler consists of \emph{per-neighbor-per-commodity} queues, i.e., the scheduler at node $n$ maintains queues $Q_{nj}^d$ for each next-hop neighbor $j$ and each commodity $d$. The servers apply regulated service discipline to shape the traffic. Let the service and departure of the server be $S(t)$ and $D(t)$, respectively. 

Let $A_n^d[t]$ denote the commodity $d$ packets entering node $n$ in slot $t$. Mathematically,
\begin{equation*}
A_n^d[t]=X_n^d[t]\mathbbm{1}_{\{n\in\cN_{source}^d\}}+\sum_{k\in\cN}D_{kn}^d[t],
\end{equation*}
where $\cN_{source}^d$ denotes the source node set with respect to commodity $d$. Let $A_{nj}^d[t]$ denote the number of packets of commodity $d$ arriving at queue $Q_{nj}^d$ in slot $t$ and $A_n^d[t]=\sum_{j\in\cN}A_{nj}^d[t]$. The queue dynamics for each queue is given as:
\begin{eqnarray}
Q_{nj}^d[t+1]&=&\Bigl(Q_{nj}^d[t]-D_{r,nj}^d[t]\Bigr)^++A_{nj}^d[t],\nonumber\\
&&\qquad\qquad\qquad\qquad\forall n\neq d,\label{eqn:dynamics1}
\end{eqnarray}

The server of the delay-aware scheduler operates with a token-based service discipline which consists of three components:
\begin{itemize}
\vspace{0.2cm}
\item \textbf{Arrival Splitting:}\\
Each commodity $d$ packet arriving at node $n$ goes into the per-neighbor-per-commodity queue $Q_{nj}^d$ with probability $\frac{\bar{R}_{nj}^{d}[t]}{\sum_{m\in\cN}\bar{R}_{nm}^{d}[t]}$, resulting in
\begin{equation}
\mathbb{E}\left[A_{nj}^d[t]\Bigm\vert A_n^d[t]\right]=A_n^d[t]\frac{\bar{R}_{nj}^{d}[t]}{\sum_{m\in\cN}\bar{R}_{nm}^{d}[t]},\label{eqn:arrival}
\end{equation}
where $\bar{R}_{nj}^{d}[t]=\frac{1}{t}\sum_{\tau=0}^{t-1}R_{nj}^d[\tau]$.
This splitting preserves the same mean link rates as the original $\vR[t]$. As an example, the arrival splitting can be done using a token-based scheme.
\vspace{0.2cm}
\item \textbf{Token Generation:}\\
For each link $\itoj\in\cL$, the system maintains \emph{token counters} $m_{nj}^d$ for each regulator queue $Q_{nj}^d$ as shown in Fig.~\ref{fig:token}. As the input to the delay-aware scheduler is loop-free, the transmissions can only be unidirectional for certain commodity $d$ over link $(i,j)$. Hence $m_{nj}^d$ and $m_{jn}^d$ cannot be non-zero simultaneously. Then we define $M_{nj}^d=m_{nj}^d+m_{jn}^d$. The token arrives at each counter with rate
\begin{equation}
S_{nj}^d=\mathbb{E}\left[A_n^d[t]\right]\frac{\bar{R}_{nj}^{d}[t]}{\sum_{m\in\cN}\bar{R}_{nm}^{d}[t]}+\delta,\label{eqn:regulator}
\end{equation}
where $\delta>0$. Note that the token arrival rates are enlarged by $\delta$ which is allowed by the excess capacity from the utility maximizing solution for $(\cN,\cL,\vc-\vepsilon)$.

\vspace{0.2cm}
\item \textbf{Service:}\\
In each time slot $t$, the token-based server chooses a winner commodity $d_{nj}^*$ with a non-zero backlog for each link $(n,j)$ as follows:
\begin{equation}
d_{nj}^*[t]=
\begin{cases}
\argmax_d\left(M_{nj}^d[t]-c_{}\right), \\
\qquad\quad\text{ if }\max_d\left[M_{nj}^d[t]-c_{}\right]>0;\\
\varnothing, \qquad\text{otherwise}.
\end{cases}
\label{eqn:tokenchoose}
\end{equation}
The server serves the commodity with the largest token count that exceeds the link capacity $c_{}$. Note that the link capacity $c_{}$ is shared by transmissions on link (n,j) in both directions. Then
\begin{equation}
D_{nj}^d[t]=
\begin{cases}
c_{}, \qquad&\text{if } d=d_{nj}^*;\\
0, \qquad&\text{if }d\neq d_{nj}^*.
\end{cases}
\end{equation}
In case of insufficient packets in queue, the server sends out dummy packets to ensure a departure of $c$ packets. After service, the token counter $M_{nj}^{d_{nj}^*}$ for the commodity $d_{nj}^*$ is decreased by $c$.\hfill $\diamond$
\end{itemize}
\end{definition}

The above algorithm does not require the knowledge of $\vR[t]$ as long as $\bar{\vR}[t]:=(\bar{R}_{nj}^d[t])_{n,j,d}$ is provided.
\begin{definition}(Token count process)
We call $M[t]$ a \textit{token count process} if:
\begin{equation*}
M[t+1]=
\begin{cases}
M[t]+\nu, &\text{if \(M[t]+\nu<c^{th}\);}\\
M[t]+\nu-c^{th}, &\text{if \(M[t]+\nu\geq c^{th}\).}
\end{cases}
\end{equation*}
\begin{equation*}
Z[t+1]=
\begin{cases}
0, &\text{if } M[t]+\nu<c^{th};\\
c^{th}, &\text{if } M[t]+\nu\geq c^{th}.
\end{cases}
\end{equation*}
where $Z[t]$ is the associated \textit{token process}, \(0<\nu<c^{th}\) is the \textit{token generation rate} and $c^{th}$ is the decision threshold. \hfill $\diamond$
\end{definition}
Following our definitions of delay-aware scheduler, it is clear that the service process of each scheduler is a token count process. We have the following lemma showing important properties of the token count process, that is, given a finite initial state, the token count process evolves towards a bounded invariant region.
\begin{lemma}\label{thm:attinv}
Let $\vM[t]=(M_1[t],\ldots,M_n[t])$ be the token count vector with token generation rate $\vnu=(\nu_1,\ldots,\nu_n)$ and decision threshold $c^{th}$. Let $D_i[t]$ be the departure from token counter $i$ at time $t$, $\forall 1\leq i\leq n$. The delay-aware scheduler serves $\vM[t]$ with the following service discipline:
\begin{eqnarray*}
i^*[t]&=&
\begin{cases}
\argmax_i\left(M_{i}[t]-c^{th}\right), \\
\qquad\quad\text{ if }\max_i\left[M_{i}[t]-c^{th}\right]>0;\\
\varnothing, \qquad\text{otherwise}.
\end{cases}\\
D_{i^*}[t]&=&
\begin{cases}
c^{th}, \qquad&\text{if } i=i^*;\\
0, \qquad&\text{if }i\neq i^*.
\end{cases}
\end{eqnarray*}
The token count dynamics is as follows:
\begin{equation*}
M_i[t+1]=M_i[t]-D_i[t]+\nu_i,\quad\forall 1\leq i\leq n.
\end{equation*}
Let $\cM=\bigl\{\vM\in\bR^n:\;\sum_{i=1}^n M_i<(n+1)c^{th}\bigr\}$. Suppose $\sum_{i=1}^n\nu_i<c^{th}$, then $\forall t_0>0$ such that $\forall \vM[t_0]\in\bR^n$, there exists $T<\infty$, such that $\vM[t_0+T]\in\cM$. Moreover, $\vM[t]\in\cM$, $\forall t\geq t_0+T$.
\end{lemma}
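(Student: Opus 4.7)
The plan is to reduce the multidimensional dynamics to a one-dimensional argument about the aggregate $S[t] := \sum_{i=1}^n M_i[t]$, which I expect to serve as a Lyapunov-like function that strictly drifts downward whenever $\vM[t] \notin \cM$ and cannot escape $\cM$ once it enters. The key structural observation is that at every slot exactly one of two things happens: either service fires at some coordinate $i^*[t]$ (in which case $S[t+1] = S[t] - c^{th} + \sum_i \nu_i$), or no service fires (in which case $S[t+1] = S[t] + \sum_i \nu_i$). Setting $\epsilon := c^{th} - \sum_i \nu_i$, the hypothesis $\sum_i \nu_i < c^{th}$ gives $\epsilon > 0$, so service strictly decreases $S$ by $\epsilon$.

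The first step is to show that the ``bad'' region $\{S \geq (n+1)c^{th}\}$ forces service. Indeed, if $\sum_i M_i[t] \geq (n+1)c^{th}$, then by pigeonhole $\max_i M_i[t] \geq \frac{n+1}{n}c^{th} > c^{th}$, so the rule in the lemma's statement fires and $S[t+1] = S[t] - \epsilon$. Iterating, $S$ is strictly decreasing by $\epsilon$ on each slot where it remains above $(n+1)c^{th}$, so after $T \leq \lceil (S[t_0] - (n+1)c^{th})^+ / \epsilon \rceil$ slots we must have $S[t_0 + T] < (n+1)c^{th}$, i.e.\ $\vM[t_0 + T] \in \cM$. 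This handles the hitting-time part and gives an explicit $T$.

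The second step is invariance: assuming $\vM[t] \in \cM$, I show $\vM[t+1] \in \cM$ by cases. If service fires, $S[t+1] = S[t] - \epsilon < S[t] < (n+1)c^{th}$, done. If no service fires, then $\max_i M_i[t] \leq c^{th}$, hence $S[t] \leq n c^{th}$, and therefore
\begin{equation*}
S[t+1] = S[t] + \sum_i \nu_i \;\leq\; n c^{th} + \sum_i \nu_i \;<\; n c^{th} + c^{th} \;=\; (n+1)c^{th},
\end{equation*}
again using the hypothesis. An induction on $t \geq t_0 + T$ then yields $\vM[t] \in \cM$ for all such $t$. I do not expect any hard step here — the only subtlety worth flagging is why the invariant threshold is $(n+1)c^{th}$ rather than the naive $n c^{th}$: it is precisely to absorb the worst-case situation in which all coordinates sit at exactly $c^{th}$ (so service has not yet triggered) and then receive their increment $\nu_i$ before the next service decision, which is the scenario that forces the extra $+1$ slack in the bound.
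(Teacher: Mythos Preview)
Your proposal is correct and follows essentially the same approach as the paper: both reduce to the aggregate $S[t]=\sum_i M_i[t]$, use pigeonhole to force service outside $\cM$ (yielding a strict decrease by $c^{th}-\sum_i\nu_i$ per slot and hence a finite hitting time), and then verify one-step invariance of $\cM$. The only cosmetic difference is in the invariance case split---you split on whether service fires, whereas the paper splits $\cM$ into the bands $\{S<nc^{th}\}$ and $\{nc^{th}\le S<(n+1)c^{th}\}$---but these are equivalent formulations of the same argument.
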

\begin{proof}
The detailed proof is provided in Appendix~\ref{sec:attinv}.
\end{proof}
Intuitively, Lemma~\ref{thm:attinv} reveals the fact that the token count process keeps track of the assigned token generation rate and the deviation is always bounded. Lemma~\ref{thm:attinv} will help us in proving the network stability when the delay-aware schedulers are used. Next we define the network capacity region for multi-hop networks.
\begin{definition}(Network capacity
  region)\label{def:networkcapacityregion} The \textit{network
    capacity region} is $\Lambda(\cN,\cL,\vc)=Co\{\vx\}$, where $\vx$
  is the exogenous arrival rate vector such that $\vx\succeq 0$ and
  such that there exists a link rate vector $\vr$ satisfying
\begin{eqnarray*}
  \vr\succeq 0,\\
  r_{nj}^d=0,\;\forall (n,j)\notin\cL,&\\
  r_{nn}^d=0,\;\forall n\in\cN,&\\
  r_{nj}^d=0,\;\forall n=d,&\\
  x_n^d\leq\sum_{j}r_{nj}^d-\sum_{n}r_{mn}^d,\;\forall n\neq d,&\\
  \sum_dr_{nj}^d+\sum_dr_{jn}^d\leq c_{nj},\;\forall (n,j)\in\cL.&\diamond
\end{eqnarray*}
\end{definition}
Notice that the solution $(\vX[t],\vR[t])$ is \textit{feasible} if
$(\vx,\vr)$ satisfies Definition~\ref{def:networkcapacityregion} such
that $\vx\in\Lambda(\cN,\cL,\vc)$, where
$\vx=\lim_{t\rightarrow\infty}\frac{1}{t}\sum_{\tau=0}^{t-1}\boldsymbol{X}[\tau]$,
$\vr=\lim_{t\rightarrow\infty}\frac{1}{t}\sum_{\tau=0}^{t-1}\boldsymbol{R}[\tau]$.

Based on the defined network capacity region, we now introduce the following proposition:
\begin{proposition}
The delay-aware scheduler operating in the network $(\cN,\cL,\vc)$ stabilizes the queues $(Q_{nj}^d)_{n,j,d}$, if there exists any loop-free solution $(\vX[t],\vR[t])$ such that $\vx\in\Lambda(\cN,\cL,\vc-\vepsilon)$, $\forall\vepsilon\succ 0$.\label{thm:stability}
\end{proposition}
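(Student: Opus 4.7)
The plan is a two-stage argument: first bound each link's token counters via Lemma~\ref{thm:attinv}, then translate the resulting service guarantees into a Foster--Lyapunov drift bound on the queue vector $(Q_{nj}^d)_{n,j,d}$.

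First I exploit the capacity slack guaranteed by the hypothesis. Because $\vR[t]$ is loop-free and $\vx \in \Lambda(\cN,\cL,\vc-\vepsilon)$ for some $\vepsilon \succ 0$, the limits $\bar{r}_{nj}^d := \lim_t \bar{R}_{nj}^d[t]$ obey flow conservation at every non-destination node, have at most one of $\bar{r}_{nj}^d, \bar{r}_{jn}^d$ non-zero per commodity, and satisfy $\sum_d \bar{r}_{nj}^d + \sum_d \bar{r}_{jn}^d \le c - \epsilon_{nj}$ on every link. I then fix the regulator slack $\delta>0$ small enough that $\sum_d (S_{nj}^d + S_{jn}^d) < c$ on every link (for instance $\delta < \min_{(n,j)} \epsilon_{nj}/(2|\cD|)$). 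Lemma~\ref{thm:attinv}, applied per link with the commodity token-counters as coordinates and threshold $c^{th}=c$, then shows that after a finite random time $T$ every counter $M_{nj}^d[t]$ lies in the bounded invariant set $\cM$. On the arrival side, the splitting rule (\ref{eqn:arrival}) together with the flow-conservation identity $\bE[A_n^d]=\sum_m \bar{r}_{nm}^d$ at non-destination nodes implies that the long-run arrival rate into $Q_{nj}^d$ equals $\bar{r}_{nj}^d$. On the service side, cumulative departures from $Q_{nj}^d$ equal cumulative tokens consumed, which equals cumulative token arrivals minus the bounded change in $M_{nj}^d$; hence the long-run service rate is exactly $S_{nj}^d = \bar{r}_{nj}^d + \delta$. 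Every queue therefore experiences a strict $\delta$-gap between service and arrival.

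Finally I apply Foster--Lyapunov with $V(\vQ)=\tfrac12 \sum_{n,j,d}(Q_{nj}^d)^2$. Using (\ref{eqn:dynamics1}), the one-step conditional drift from any state with token counters already in $\cM$ is bounded above by $-\delta \sum_{n,j,d} Q_{nj}^d[t] + B$, where $B<\infty$ absorbs the bounded second moments of the arrivals, the Bernoulli splits, and the residual token fluctuation. Standard moment-bound arguments then yield $\limsup_{t\to\infty} \tfrac1t \sum_{\tau=0}^{t-1} \bE\bigl[\sum_{n,j,d} Q_{nj}^d[\tau]\bigr] < \infty$, which by Definition~\ref{def:stability} is the desired stability statement. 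I expect the main obstacle to lie not in the Lyapunov calculation itself but in cleanly decoupling the time-scales: the token counters, the queues, and the splitting probabilities are all driven by the running averages $\bar{R}_{nj}^d[t]$, which converge only asymptotically. Justifying that the effective service rate can be treated as $S_{nj}^d$ once the counters have entered $\cM$ --- e.g., by a two-time-scale comparison between the fast queue/token dynamics and the slow convergence of $\bar{\vR}[t]$, or by conditioning on the $\sigma$-field generated by $\bar{\vR}[t]$ --- is where the technical work concentrates.
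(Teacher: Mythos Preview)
Your overall architecture---bound the token counters via Lemma~\ref{thm:attinv}, then run a quadratic Lyapunov argument on $(Q_{nj}^d)$---matches the paper's proof. But there is a real gap in the drift step. The one-step conditional drift bound you assert,
\[
\Delta V(\vQ[t]) \le -\delta \sum_{n,j,d} Q_{nj}^d[t] + B,
\]
is false as stated. Conditioned on $\vM[t]$, the departure $D_{nj}^d[t]$ is deterministic and equals either $0$ or $c$; it is \emph{not} close to $S_{nj}^d$ slot-by-slot. A commodity that loses the token competition on link $(n,j)$ in slot $t$ receives zero service, so the cross term $2Q_{nj}^d[t]\bigl(\bE[A_{nj}^d[t]] - D_{nj}^d[t]\bigr)$ is strictly positive for that queue, and no finite $B$ absorbs it. The paper fixes this by taking a $T$-step drift with $T > (|\cD|+1)c/\delta$: over a window of length $T$ the cumulative departures satisfy $\bigl|\sum_{\tau=t}^{t+T-1} D_{nj}^d[\tau] - T S_{nj}^d\bigr| \le (|\cD|+1)c$ by the token-counter bound, and only then does the linear term become uniformly $\le -\delta' \sum Q_{nj}^d[t]$ for some $\delta'>0$. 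Your argument needs this multi-step averaging; ``residual token fluctuation'' cannot be pushed into the additive constant because it multiplies $Q_{nj}^d[t]$.

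On your closing worry about the time-varying splitting probabilities $\bar{R}_{nj}^d[t]$: the paper simply sidesteps this by freezing the splitting ratios and token rates at their steady-state values $\bar{R}_{nj}^d$ (see (\ref{eqn:arrivalfixed})--(\ref{eqn:regulatorfixed})), so the two-time-scale analysis you anticipate is not carried out there either. The technical core of the paper's proof is the $T$-step averaging of the token-driven service, not the convergence of $\bar{\vR}[t]$.
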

\begin{proof}
The detailed proof is provided in Appendix~\ref{sec:stability}.
\end{proof}

\textit{Remark:} To stabilize the network, the proposed delay-aware scheduler benefits from the special properties of the token count process. In fact, we have proved that the delay-aware scheduler structure can be extended to utilize a general service process that satisfies certain regulation constraints and still achieves network stability. The existence and characterization of other delay-aware service processes remains an open research problem.

\subsection{Delay-aware cross-layer policy}
The approaches we proposed in Section~\ref{sec:darouting} and~\ref{sec:regulation} can be used in the multi-layer architecture of Section~\ref{sec:architecture}. To attain a utility maximization solution that reduces the expected end-to-end delay, we incorporate back-pressure solution to problem~(\ref{eqn:stochasticBP}) together with delay-aware routing and delay-aware scheduling in the policy design. In the following policy, we call the queue length seen by DTBP the price, which is a value reflecting the queue length price instead of being an actual queue.

\vspace{1ex} \hrule \vspace{1ex} \textbf{Delay-Aware Cross-Layer Policy}
\begin{itemize}
\item The first layer runs DTBP policy (see Section~\ref{sec:background}) to generate $(\tilde{\vX}[t],\tilde{\vR}[t])$ for $(\cN,\cL,\vc-\vepsilon)$ in a virtual implementation using counters for prices.
\item The second layer performs the delay-aware link rate mapping from the first layer and generates $\hat{\vR}[t]$ as in~(\ref{eqn:mapping}) with $\vR[t]=\tilde{\vR}[t]$.
\item The third layer uses delay-aware schedulers with regulated service in the network $(\cN,\cL,\vc)$. Solution $\vX[t]=\tilde{\vX}[t]$ is received from the first layer. Solution $\bar{\vR}[t]=\hat{\vR}[t]$ is received from the second layer. Then the delay-aware scheduling is performed as described in Section~\ref{sec:regulation} which determines the actual queueing dynamics.
\end{itemize}
\vspace{1ex} \hrule \vspace{1ex}
This cross-layer policy is an online
policy that starts to function while the DTBP is converging. All
layers run different algorithms in parallel and dynamic solution
updates can be performed among layers.

The cross-layer policy inherits optimality (Proposition~\ref{thm:optimal}) and loop-freeness (Proposition~\ref{thm:loopfree}) characteristics of the routing component, and the stability (Proposition~\ref{thm:stability}) of the delay-aware scheduling component, which lead to the following fundamental result:
\begin{theorem}
Delay-aware cross-layer policy results in loop-free and stable rate assignments that arbitrarily approaches maximum utility solutions for a network $(\cN,\cL,\vc)$.\label{thm:policy}
\end{theorem}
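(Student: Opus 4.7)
The plan is to verify the three advertised properties of the delay-aware cross-layer policy---approximate utility optimality, loop-freeness of the routes, and stability of the schedulers' queues---by invoking one of the already-established propositions per layer and then threading the three arguments together along the data-flow direction Layer~1 $\to$ Layer~2 $\to$ Layer~3.

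\textbf{Utility optimality.} First, because Layer~1 runs DTBP on the reduced-capacity network $(\cN,\cL,\vc-\vepsilon)$, standard back-pressure analysis (\cite{Neely03,erysri06a}) implies that the long-run averages $(\tilde{\vx}^*,\tilde{\vr}^*)$ are within $O(1/K)$ of the fluid optimum of~(\ref{eqn:objectproblem}) for $(\cN,\cL,\vc-\vepsilon)$. Driving $\vepsilon\succ 0$ to zero componentwise while $K\to\infty$ then makes the corresponding utility arbitrarily close to the optimum of the original problem on $(\cN,\cL,\vc)$ by continuity of the optimal utility in the capacity. Proposition~\ref{thm:optimal} next asserts that the link-rate mapping performed in Layer~2 preserves the optimal flow vector and the utility value, so that the pair $(\tilde{\vx}^*,\hat{\vr}^*)$ remains optimal. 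Finally, Layer~3 injects $\tilde{\vX}[t]$ verbatim and, once its regulator queues are stable, emits commodity-$d$ packets across link $\itoj$ at exactly the prescribed net rate $\hat{r}_{ij}^{*d}$; hence the realized end-to-end throughput vector coincides with $(\tilde{\vx}^*,\hat{\vr}^*)$ and the claimed utility is delivered.

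\textbf{Loop-freeness and stability.} Loop-freeness follows directly from Proposition~\ref{thm:loopfree}: under Assumption~\ref{ass:strictlyDecreasing}, the DTBP output fed into Layer~2 induces strictly decreasing steady-state queue averages along every directed link carrying a positive net flow, so the subgraphs $\hat{\cG}^d$ produced by the link-rate mapping are acyclic for each $d\in\cD$. Because the token generators in Layer~3 fire on link $\itoj$ for commodity $d$ only when $\bar{R}_{nj}^d[t]>0$, actual packet trajectories are confined to these acyclic subgraphs, establishing the loop-free property of the complete cross-layer policy. Stability of the per-neighbor-per-commodity queues $\{Q_{nj}^d\}_{n,j,d}$ is then obtained by invoking Proposition~\ref{thm:stability}: the loop-free $\hat{\vR}[t]$ together with $\tilde{\vx}^*\in\Lambda(\cN,\cL,\vc-\vepsilon)$ satisfy exactly the hypotheses of that proposition, so the delay-aware scheduler operating with the full capacity budget $\vc$ stabilizes every regulator queue.

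\textbf{Main obstacle.} The principal technical subtlety is the time-scale coupling between the three layers: Layers~2 and~3 consume the \emph{running averages} $\bar{\vR}[t]$ of the Layer~1 output, rather than the idealized asymptotic rates $\hat{\vr}^*$ that the earlier propositions take as their inputs. A fully rigorous proof therefore has to argue that the convergence $\bar{\vR}[t]\to\hat{\vr}^*$ is fast enough, relative to the drift of the regulator queues, that the token generation rates $S_{nj}^d[t]$ sit strictly inside the Layer~3 capacity region for all $t$ beyond some finite random time; the $\vepsilon$-slack reserved from capacity and the $\delta$-margin built into~(\ref{eqn:regulator}) are designed precisely to create this headroom. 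Once this uniform-in-time feasibility is in place, the Lyapunov-drift argument underlying Proposition~\ref{thm:stability}, together with Lemma~\ref{thm:attinv} on the boundedness of the token-count process, carries over and yields both the stability claim and---via ergodicity of the regulated scheduler---the throughput-matching that closes the utility-optimality loop.
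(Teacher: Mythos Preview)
Your proposal is correct and follows essentially the same approach as the paper: the paper's own ``proof'' of Theorem~\ref{thm:policy} is just the single sentence preceding it, which asserts that the result is inherited directly from Proposition~\ref{thm:optimal} (optimality), Proposition~\ref{thm:loopfree} (loop-freeness), and Proposition~\ref{thm:stability} (stability). Your layer-by-layer threading of these three propositions is exactly that argument spelled out in detail; in fact, your discussion of the time-scale coupling between the running averages $\bar{\vR}[t]$ and the idealized steady-state rates $\hat{\vr}^*$ identifies a genuine technical gap that the paper leaves unaddressed (the stability proof in Appendix~\ref{sec:stability} explicitly assumes the \emph{steady-state} splitting ratios~(\ref{eqn:arrivalfixed})--(\ref{eqn:regulatorfixed}) rather than the time-varying ones in~(\ref{eqn:arrival})--(\ref{eqn:regulator})).
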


\section{Numerical Results}\label{sec:numerical}
\begin{figure*}[!t]
\centering
\subfigure[]{\includegraphics[width=2.3in]{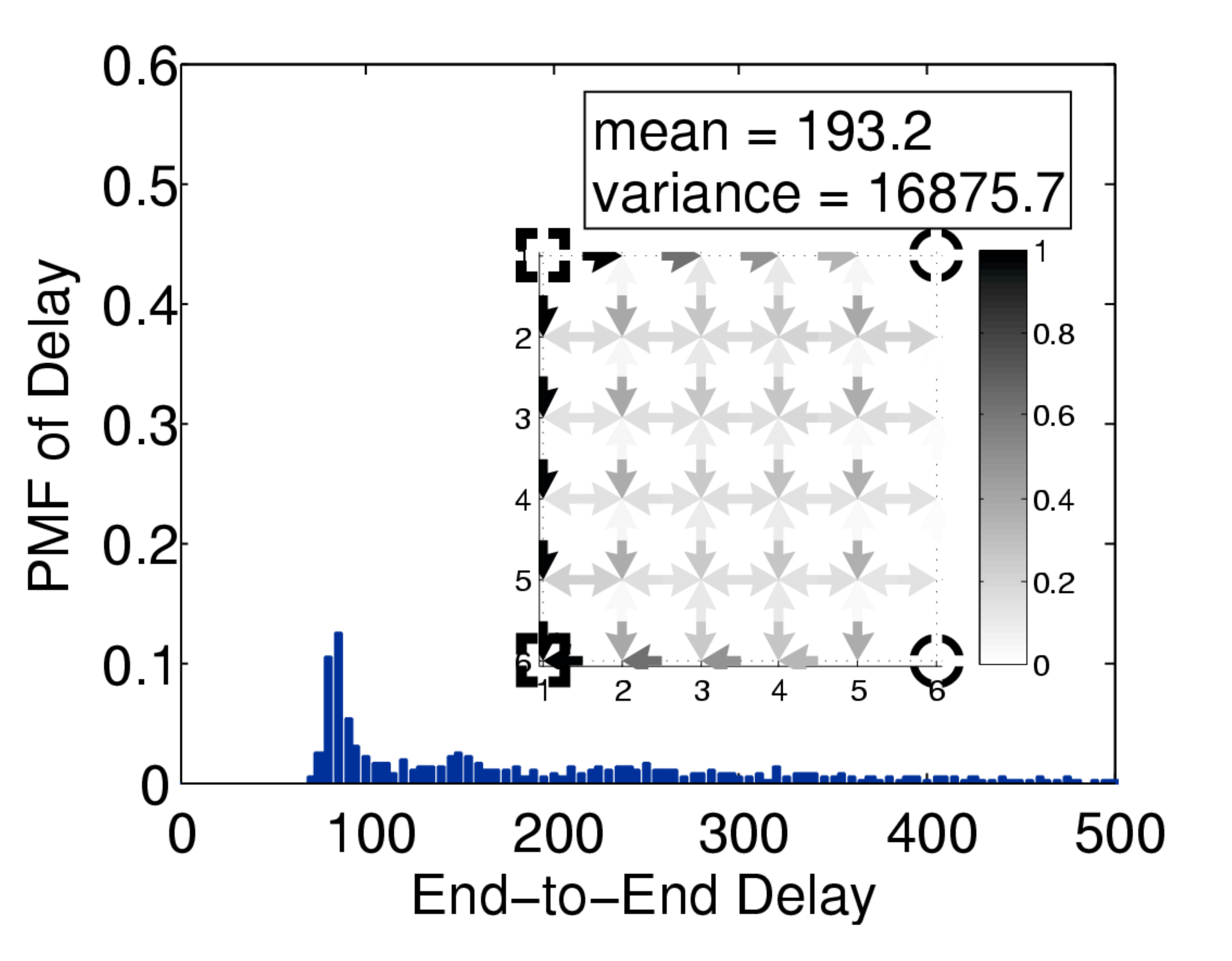}}
\label{fig:s1bpdp}
\hfil
\subfigure[]{\includegraphics[width=2.3in]{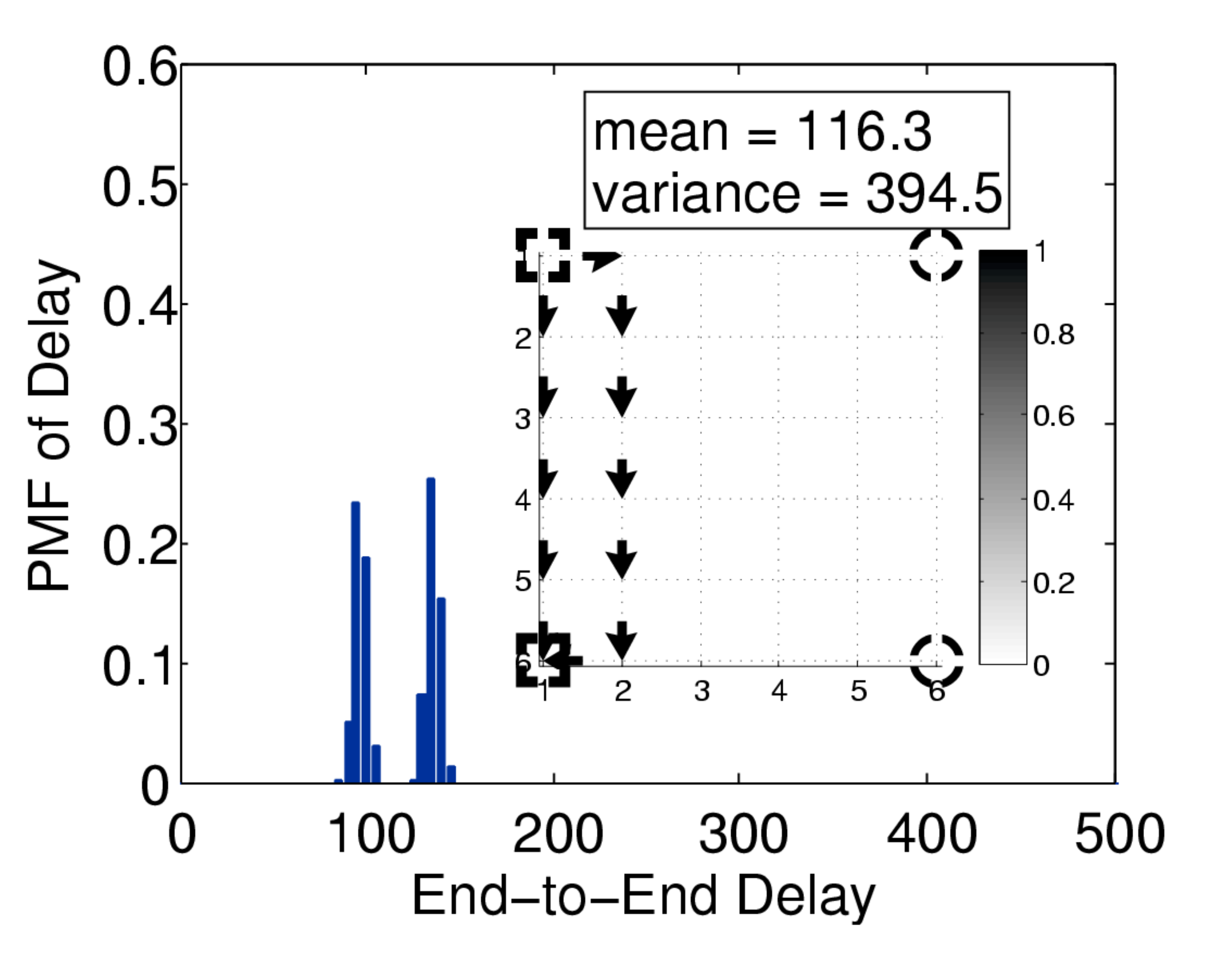}}
\label{fig:s1iqlbdp}
\hfil
\subfigure[]{\includegraphics[width=2.3in]{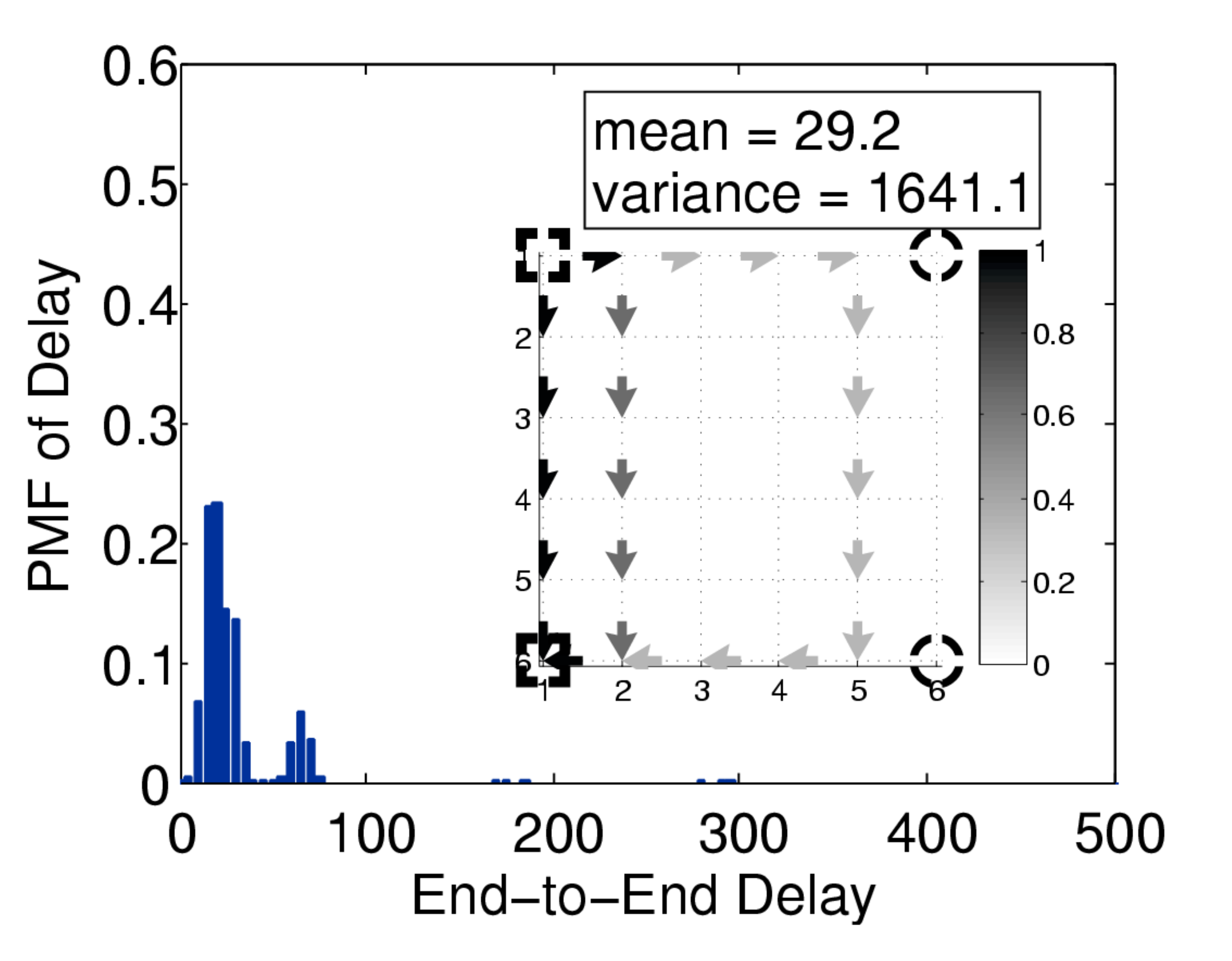}}
\label{fig:s1tbdp}
\caption{Scenario 1 Delay Performance (a) Back-Pressure Policy (b) Min-Resource Algorithm (c) Delay-Aware Cross-Layer Policy}
\label{fig:scenario1}
\end{figure*}

\begin{figure*}[!t]
\centering
\subfigure[]{\includegraphics[width=2.3in]{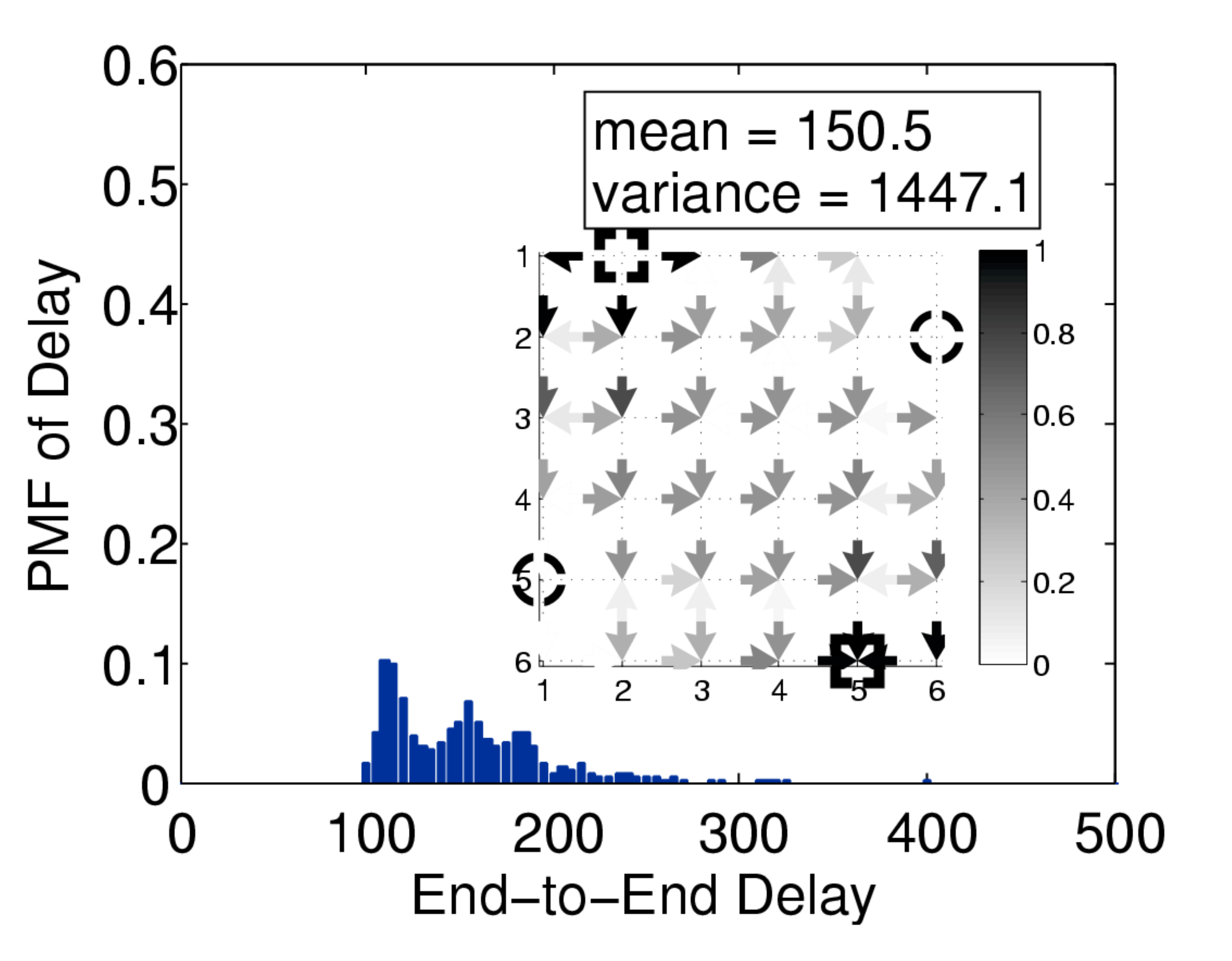}}
\label{fig:s3bpdp}
\hfil
\subfigure[]{\includegraphics[width=2.3in]{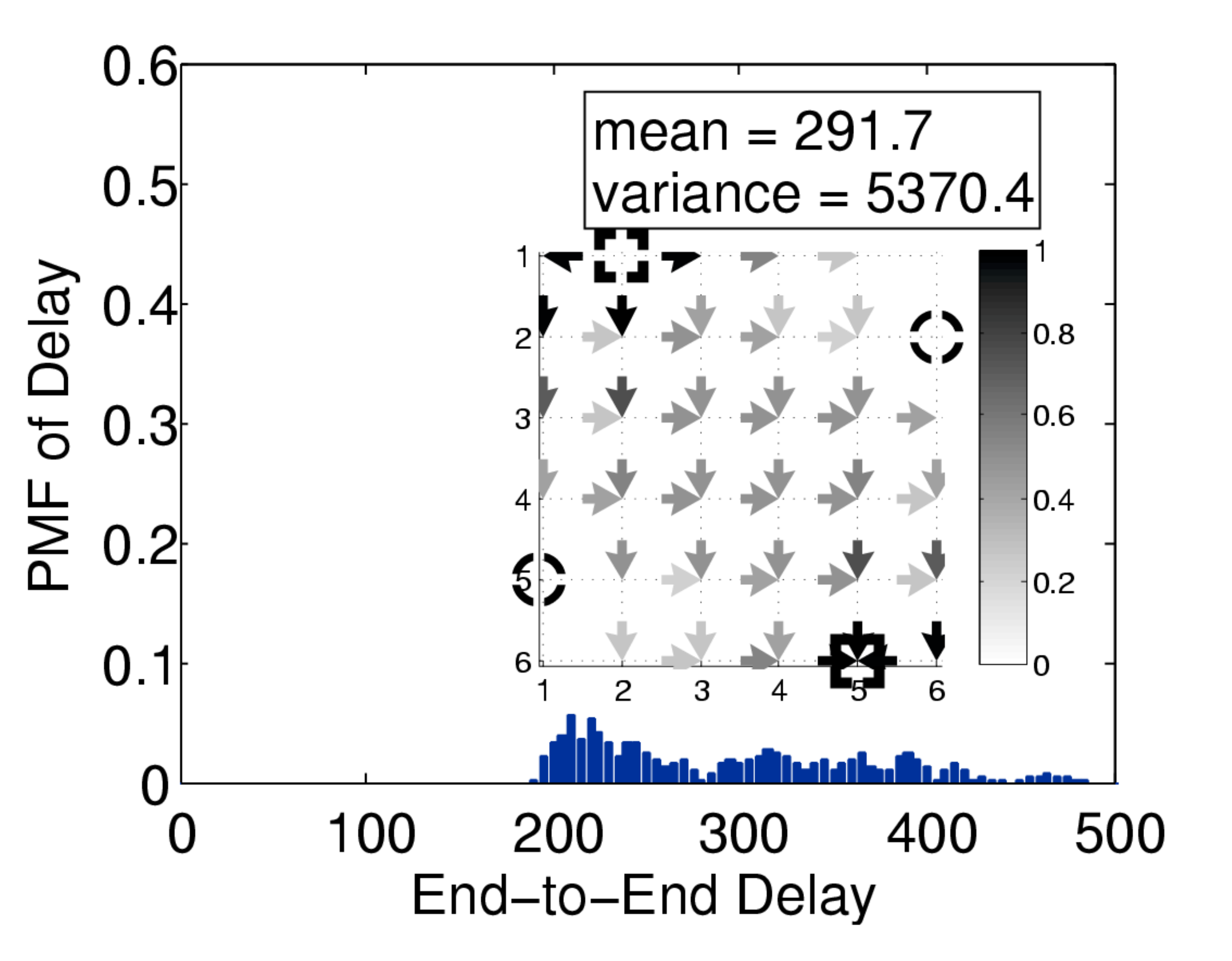}}
\label{fig:s3iqlbdp}
\hfil
\subfigure[]{\includegraphics[width=2.3in]{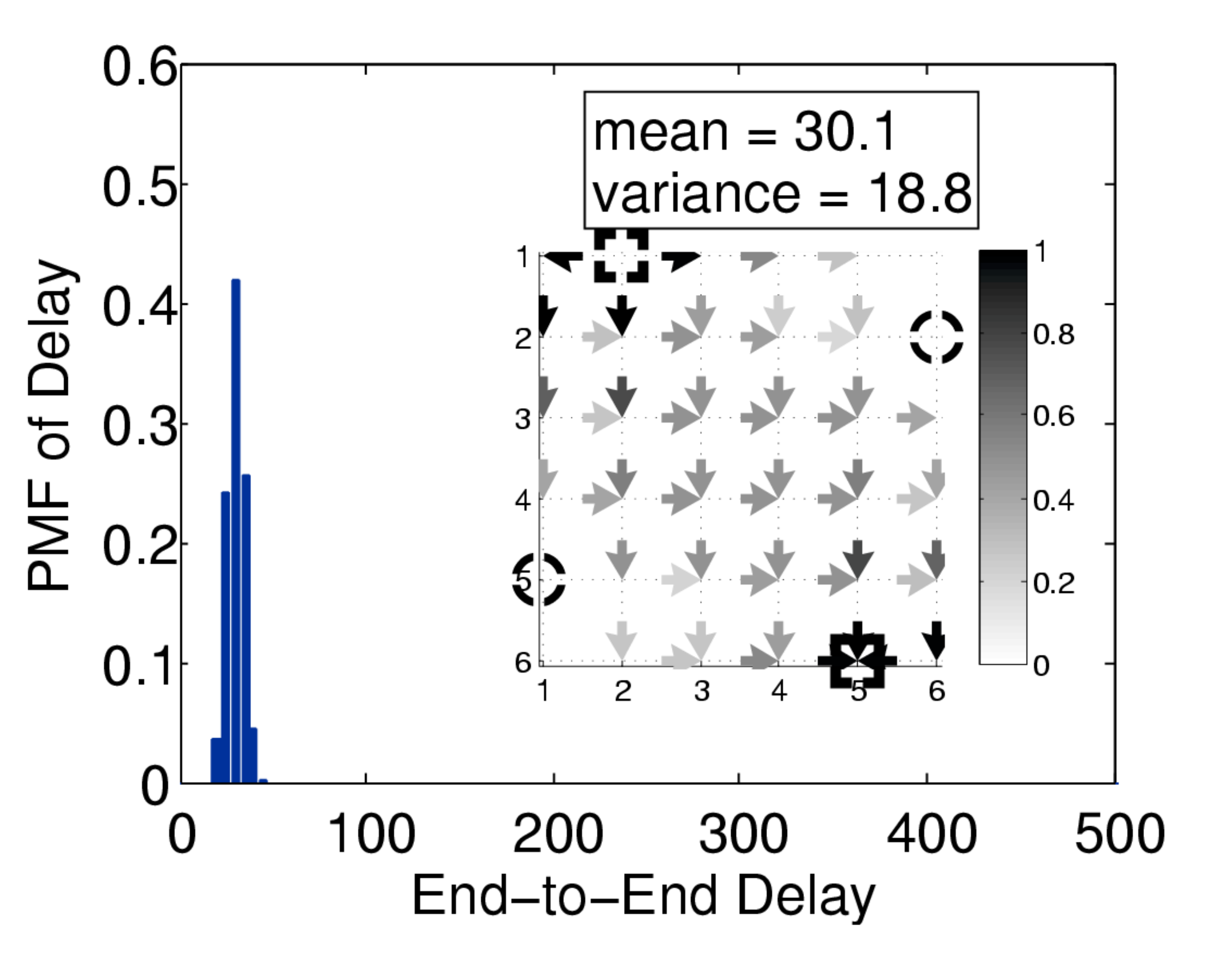}}
\label{fig:s3tbdp}
\caption{Scenario 2 Delay Performance (a) Back-Pressure Policy (b) Min-Resource Algorithm (c) Delay-Aware Cross-Layer Policy}
\label{fig:scenario2}
\end{figure*}

In this section, we present numerical results for our proposed delay-aware cross-layer policy. The cross-layer policy is an online policy starting to function as DTBP is converging. All layers run different algorithms parallelly and perform periodical solution updates in implementation. We compare the delay performance of our policy with the back-pressure policy, which has long term optimality guarantees, and the more recent min-resource algorithm (see Section~\ref{sec:background}), which has superior routing characteristics over back-pressure policy. As a representative topology, simulations are carried out in a $6\times6$ grid network. Simulation duration is 30000 time slots. Moving average method is used to update $\hat{\vR}[t]$ as follows:
\begin{align*}
\hat{R}_{nj}^d[t]={}&\left[\frac{1}{W}\sum_{\tau = t_0-W}^{t_0}\left(R_{nj}^d[\tau]-R_{jn}^d[\tau]\right)\right]^+,\\
&\qquad\qquad\qquad\qquad\qquad\qquad t_0\leq t< t_0+T,
\end{align*}
where $W$ is the window size and $T$ is update period. $W$ and $T$ are both set to 5000. We assume that all links have unit capacity. The $K$ parameter for the flow controller of the back-pressure policy is chosen to be 200. We implement min-resource algorithm as described in Section~\ref{sec:background} with $M=3$. For fair comparison, all queues are implemented as per-commodity queues.

Two representative scenarios are considered to demonstrate the performance difference of the three algorithms. \textit{Scenario 1} has parallel traffics with two commodities. \textit{Scenario 2} contains cross traffics where each source node maintains throughput up to 3 compared to 2 in Scenario 1. The links in the center \(4\times4\) square are fully utilized. Scenario 2 has less excess capacity compared to Scenario 1. 

For better demonstration, the routing information is embedded in delay distribution graph. Since the source-destination pairs are symmetric, results are given for one commodity only. Squares and circles indicate source and destination locations for commodity 1 and 2, respectively, both flowing from top to bottom. In Scenario 1, as is shown in Fig.~\ref{fig:scenario1}(a), the back-pressure policy results in loops. The delay distribution is heavy-tailed which also indicates that packets may experience looping. Fig.~\ref{fig:scenario1}(b) demonstrates that the min-resource algorithm utilizes minimal number of links, thus attaining shortest paths from source to destination for each commodity. Since loops are avoided, the delay distribution is improved with the delay peaks moved towards lower values. Fig.~\ref{fig:scenario1}(c) shows the network topology for delay-aware cross-layer policy. The selected paths are guaranteed to be loop-free. The same figure also shows that the delay distribution is multimodal. From this distribution, one observes that the majority of packets follow the shortest path and the others are routed over longer paths. The mean delay performance is improved over other two solutions.

In Scenario 2, we demonstrate a situation where the network is heavily utilized. The existence of excess capacity is a necessary condition for back-pressure policy to form loops. Within a network with less excess capacity, back-pressure policy has less resources for loops, which leaves less chance for routing improvement. Fig.~\ref{fig:scenario2}(a) shows the back-pressure result for such a network with increased throughput. In Fig.~\ref{fig:scenario2}(b), we observe no improvement for min-resource algorithm since the network is already heavily loaded. Min-resource algorithm cannot further minimize resource usage without sacrificing throughput. In Fig.~\ref{fig:scenario2}(c), we still observe significant delay improvements in the mean and variance of delay. The result suggests that the delay-aware cross-layer policy is effective in reducing delay regardless of network utilization.

We further study the effect of the design parameter $K$ used in the DTBP policy. It is known that the allocated flow rates approach the optimal rate $\vx^*$ as $\textstyle K\rightarrow\infty$. However, this also increases the equilibrium queue lengths in the DTBP and causes delay degradation (\cite{erysri05, neemodli05}, etc.)

\begin{figure}[!th]
\centering
\includegraphics[width=2.5in]{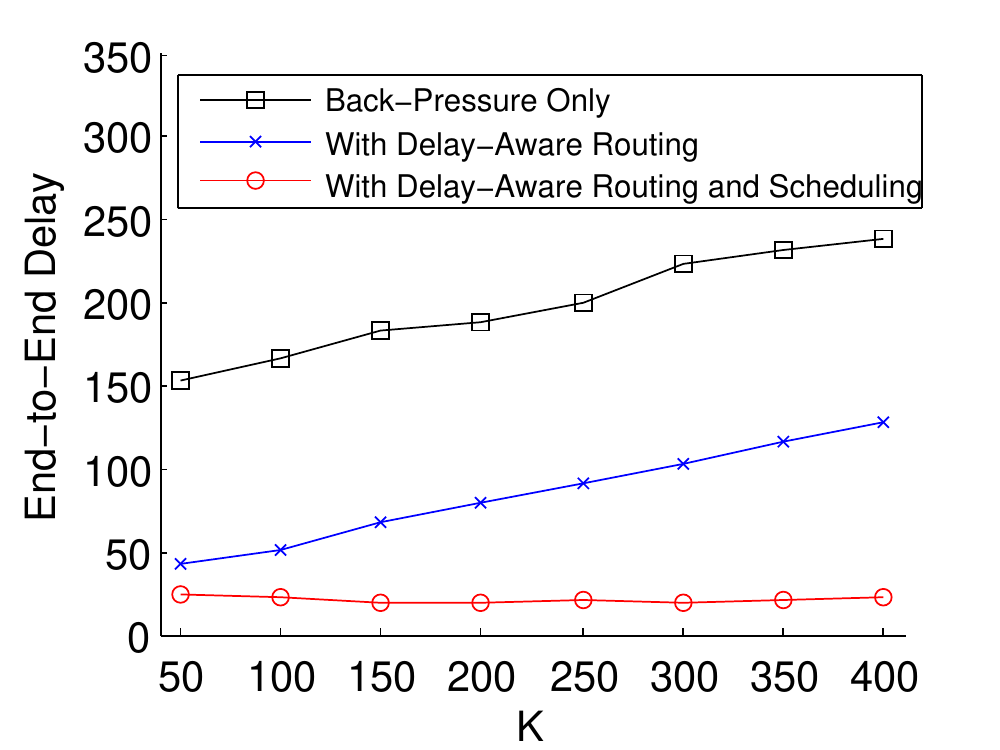}
\caption{End-to-End Delay under Different K Values}
\label{fig:kindependence}
\end{figure}
The delay-aware cross-layer policy offers a solution to this dilemma. In multi-layer architecture, the delay-aware cross-layer policy keeps real layer queue length separated from back-pressure layer queue length. Then the dependence on $K$ is relaxed. Fig.~\ref{fig:kindependence} shows that in Scenario 1, the sensitivity of the delay on $K$ is significantly reduced with delay-aware cross-layer policy. As expected, the delay under back-pressure policy grows as \(K\) increases. In this figure, we also show that the end-to-end delay can be reduced by eliminating loops alone. However, further delay improvement can be achieved by utilizing the delay-aware scheduling with delay-aware routing.

\section{Conclusion}\label{sec:conclusion}

In this work, we exposed the delay deficiencies of many long-term
optimal policies when operated in multi-hop networks, and identified
several unexploited design choices within the routing and scheduling
strategy spaces that yield drastic delay improvements. In
particular, we exploit: the flexibility in link rate assignments to
eliminate loops in routing; and the service shaping opportunities in
scheduling to reduce queue-sizes, while preserving the long-term
optimality characteristics.

Along with these algorithms, we also present a generic delay-aware
design framework that can be used to develop other long-term optimal
algorithms with favorable delay characteristics. This framework
provides a unique and methodical approach to the design of modular
solutions where end-to-end delay characteristics can be improved
without sacrificing from long-term optimality. As such, our
framework offers a promising direction for tackling the challenging
problem of delay-aware algorithm design for wireless multi-hop
networks, as well. To that end, in our future work, we will extend
the insights and techniques developed in this work to the wireless
domain, while accounting for the intrinsic challenges associated
with the interference-limited nature of wireless communications.

%

\bibliographystyle{IEEEtran}
\bibliography{IEEEabrv,./refs}

\appendix
\newcommand{\vA}{\boldsymbol{A}}
\newcommand{\vpi}{\boldsymbol{\pi}}

\subsection{Proof of Proposition \ref{thm:optimal}}\label{sec:optimal}
Let the optimal solution to (\ref{eqn:objectproblem}) be $(\vx^*,\vr^*)$, we define
\begin{eqnarray}
&&\hat{\vx}^*=\vx^*;\label{eqn:hat_vx}\\
&&\hat{\vr}^*\mbox{ as defined in Equations (\ref{eqn:r_ij_hat}) and (\ref{eqn:r_ji_hat})}. \nonumber
\end{eqnarray}
Since $\hat{\vx}^*=\vx^*$, the optimal value of the objective function (\ref{eqn:objectproblem}) is not affected. Thus, to show that $(\hat{\vx}^*,\hat{\vr}^*)$ is also an optimal solution to (\ref{eqn:objectproblem}), it is sufficient to show that the pair $(\hat{\vx}^*,\hat{\vr}^*)$ satisfies the constraints (\ref{eqn:xpositive})--(\ref{eqn:flowconservation}).

$\bullet$ Verifying Equation~(\ref{eqn:xpositive}) and (\ref{eqn:rpositive}):

$\hat{x}_s^{*d}\geq0$ because $x_s^{*d}\geq0$ for all $s,d\in\cD$; $\hat{r}_{ij}^{*d}\geq0$ for all $(i,j)\in\cL$ and $d\in\cD$ by its definition in Equations~(\ref{eqn:r_ij_hat}) and (\ref{eqn:r_ji_hat}).

$\bullet$ Verifying Equation~(\ref{eqn:capacityconstraint}):

By the definition of $\hat{r}_{ij}^{*d}$ and the fact that $r_{ij}^{*d}\geq 0$, $\forall (i,j)\in\cL$, $\forall d\in\cD$, we have \(\hat{r}_{ij}^{*d} \leq r_{ij}^{*d},\, \forall (i,j)\in\cL,\, \forall d\in\cD\). Then \(\sum_d\hat{r}_{ij}^{*d} \leq \sum_dr_{ij}^{*d}\). So \(\sum_d\hat{r}_{ij}^{*d} \leq c_{ij}\).

$\bullet$ Verifying Equation~(\ref{eqn:flowconservation}):

Noticing that for any real number $Z$, \((Z)^+=\frac{1}{2}(Z+|Z|)\), Equations (\ref{eqn:r_ij_hat}) and (\ref{eqn:r_ji_hat}) can be rewritten as:
\begin{eqnarray*}
\hat{r}_{ij}^{*d} &=& \frac{1}{2} \left((r_{ij}^{*d}-r_{ji}^{*d})+ |r_{ij}^{*d}-r_{ji}^{*d}|\right);\\
\hat{r}_{ji}^{*d} &=& \frac{1}{2} \left((r_{ji}^{*d}-r_{ij}^{*d})+ |r_{ji}^{*d}-r_{ij}^{*d}|\right).
\end{eqnarray*}
Subtracting the above two equations, we have
\begin{eqnarray}
\hat{r}_{ij}^{*d} - \hat{r}_{ji}^{*d} = r_{ij}^{*d}-r_{ji}^{*d}\label{eqn:relation}.
\end{eqnarray}
Thus, the following equation holds:
\begin{eqnarray*}
&& \hat{x}_n^{*d} + \sum_{\langle m,i\rangle\in\cL} \hat{r}_{mi}^{*d} -\sum_{\itoj\in\cL}\hat{r}_{ij}^{*d}\\
&\stackrel{(a)}{=}& \hat{x}_i^{*d} + \sum_{j}\left( \hat{r}_{ij}^{*d} - \hat{r}_{ji}^{*d} \right)\\
&\stackrel{(b)}{=}& x_i^{*d} + \sum_{j}\left( r_{ij}^{*d} - r_{ji}^{*d} \right)
\leq 0,
\end{eqnarray*}
where (a) follows from (\ref{eqn:relation}) and (b) follows from (\ref{eqn:hat_vx}).

Therefore, $(\hat{\vx}^*,\hat{\vr}^*)$ is also an optimal solution to (\ref{eqn:objectproblem}).

\subsection{Proof of Lemma \ref{lmm:3hops}}\label{sec:3hops}

Lemma \ref{lmm:3hops} can be proven by constructing the queue-evolution Markov chain under the DTBP algorithm and studying the steady-state behavior of the queue-length evolution from zero state.

To simplify the notation, we denote the nodes as node $0$ to node $n$ in a $n$-hop tandem network, where node $0$ is the destination, and node $n$ is the source. Since only one flow is assumed to exist, we omit the superscript $d$ indicating the destination of the flow.

\textbf{Case 1: }$n=1$. In this case, node $1$ receives a packet with probability $a$ in each time-slot. Whenever node $1$ has one packet, it sends the packet to the destination. Thus $Prob\{P_1=1\}=a$, and $Prob\{P_1=0\}=1-a$, thus $$\bar{P}_1^*=a>0=\bar{P}_0^*$$ when $a\in(0,1)$.

\textbf{Case 2: }$n=2$. For brevity, we denote the state of the Markov chain $(P_2,P_1)$ as in the Table \ref{tab:2hop}.
\begin{table}[!ht]
\begin{center}
\caption{The states in the Markov chain of a 2-hop tandem network}\label{tab:2hop}
\begin{tabular}{c|c||c|c}\hline\hline
$(P_2,P_1)$& State & $(P_2,P_1)$& State\\\hline\hline
$(0,0)$ & 1 & $(1,1)$ & 4 \\\hline
$(0,1)$ & 2 & $(2,0)$ & 5\\\hline
$(1,0)$ & 3 & $(2,1)$ & 6\\\hline\hline
\end{tabular}
\end{center}
\end{table}
Since the Markov chain itself is messy to plot, we only present its state transition matrix
\begin{eqnarray*}
\vA = \left[
\begin{array}{cccccc}
b & a & \cdot & \cdot & \cdot & \cdot\\
\cdot & \cdot & b & \cdot & a & \cdot\\
\cdot & b & \cdot & a & \cdot & \cdot\\
\cdot & \cdot & b & \cdot & a & \cdot\\
\cdot & \cdot & \cdot & b & \cdot & a\\
\cdot & \cdot & \cdot & b & \cdot & a
\end{array}
\right],
\end{eqnarray*}
where $b=1-a$, and we use ``$\cdot$'' to represent 0 for clarity. Solve the equation $$\vA\vpi=\vpi$$ and the probability constraint $$\sum_i \pi_i=1$$ to get the steady-state distribution of this Markov chain, and the average queue-length can be calculated by
\begin{eqnarray*}
\bar{P}_1^* &=& \pi_2 + \pi_4 + \pi_6,\\
\bar{P}_2^* &=& \pi_3+\pi_4 + 2(\pi_5+\pi_6).
\end{eqnarray*}
It can be verified that $$\bar{P}_2^*>\bar{P}_1^*>\bar{P}_0^*=0$$ when $a\in(0,1)$. Shown in Fig. \ref{fig:3hopsteady} is the average queue-lengths as a function of $a$ in a 2-hop tandem network.

\begin{figure}[!h]
\begin{center}
\includegraphics[width=2.5in]{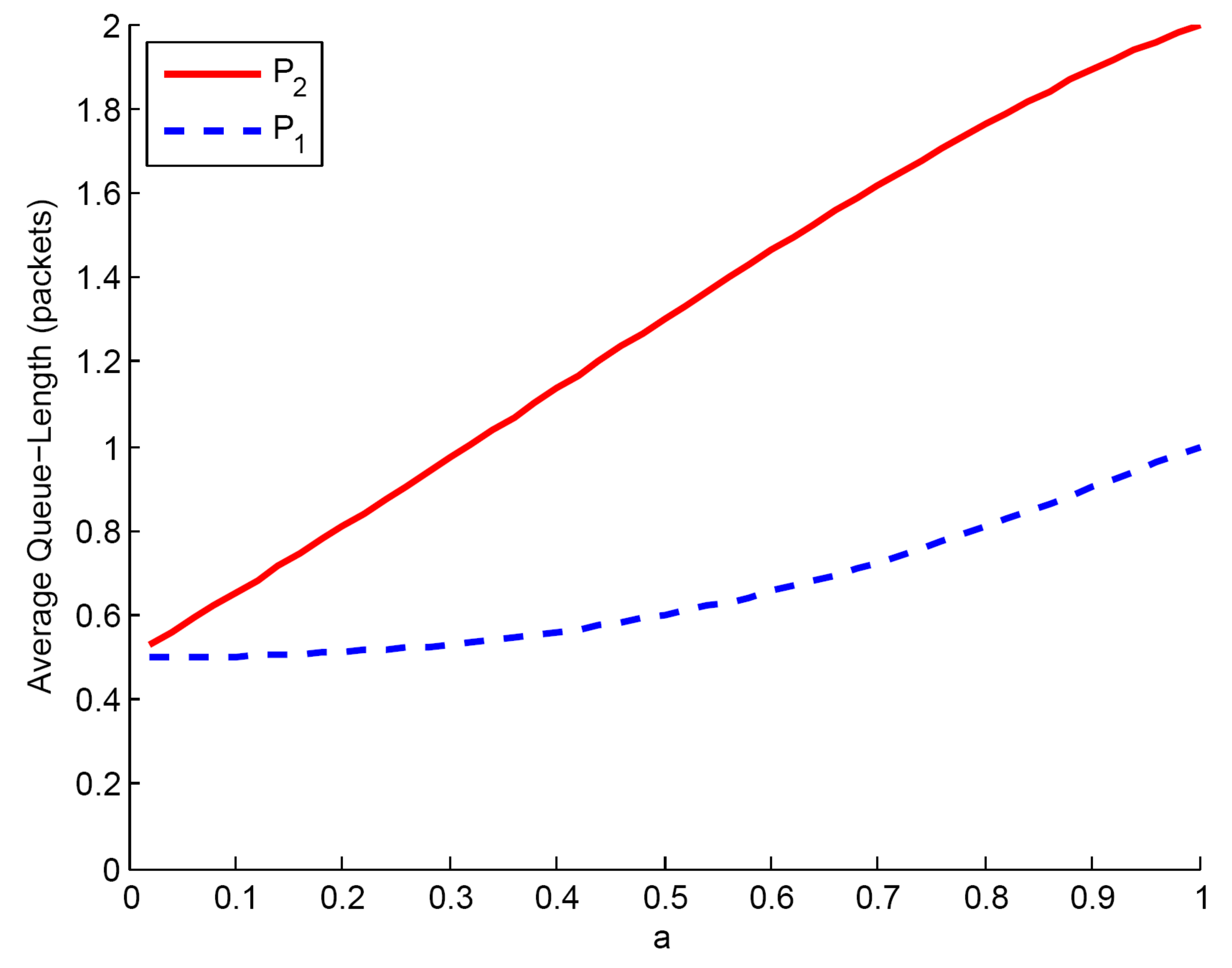}
\caption{The average queue-length in a 2-hop tandem network under Bernoulli($a$) arrival. The queue-lengths are strictly decreasing from source to destination.} \label{fig:2hopsteady}
\end{center}
\end{figure}

\textbf{Case 3: }$n=3$. For brevity, we denote the state of the Markov chain $(P_3,P_2,P_1)$ as in the Table \ref{tab:3hop}.
\begin{table}[!ht]
\begin{center}
\caption{The states in the Markov chain of a 3-hop tandem network}\label{tab:3hop}
\begin{tabular}{c|c||c|c}\hline\hline
$(P_3,P_2,P_1)$& State & $(P_3,P_2,P_1)$& State\\\hline\hline
$(0,0,0)$ & 1 & $(1,0,1)$ & 9\\\hline
$(1,0,0)$ & 2 & $(2,0,1)$ & 10\\\hline
$(0,1,0)$ & 3 & $(3,0,1)$ & 11\\\hline
$(1,1,0)$ & 4 & $(2,1,1)$ & 12\\\hline
$(0,2,0)$ & 5 & $(3,1,1)$ & 13\\\hline
$(1,2,0)$ & 6 & $(2,2,1)$ & 14\\\hline
$(2,2,0)$ & 7 & $(3,2,1)$ & 15\\\hline
$(3,2,0)$ & 8 & &\\\hline\hline
\end{tabular}
\end{center}
\end{table}
With this numbering of the states, the state transition matrix is given by
\begin{eqnarray*}
&\vA =\\
&\left[
\begin{array}{ccccccccccccccc}
b & a & \cdot & \cdot & \cdot & \cdot & \cdot & \cdot & \cdot & \cdot & \cdot & \cdot & \cdot & \cdot & \cdot\\
\cdot & \cdot & b & a & \cdot & \cdot & \cdot & \cdot & \cdot & \cdot & \cdot & \cdot & \cdot & \cdot & \cdot\\
\cdot & \cdot & \cdot & \cdot & \cdot & \cdot & \cdot & \cdot & b & a & \cdot & \cdot & \cdot & \cdot & \cdot\\
\cdot & \cdot & \cdot & \cdot & \cdot & \cdot & \cdot & \cdot & b & a & \cdot & \cdot & \cdot & \cdot & \cdot\\
\cdot & \cdot & \cdot & \cdot & \cdot & \cdot & \cdot & \cdot & b & a & \cdot & \cdot & \cdot & \cdot & \cdot\\
\cdot & \cdot & \cdot & \cdot & \cdot & \cdot & \cdot & \cdot & \cdot & b & a & \cdot & \cdot & \cdot & \cdot\\
\cdot & \cdot & \cdot & \cdot & \cdot & \cdot & \cdot & \cdot & \cdot & \cdot & \cdot & b & a & \cdot & \cdot\\
\cdot & \cdot & \cdot & \cdot & \cdot & \cdot & \cdot & \cdot & \cdot & \cdot & \cdot & \cdot & \cdot & b & a\\
\cdot & \cdot & \cdot & \cdot & b & a & \cdot & \cdot & \cdot & \cdot & \cdot & \cdot & \cdot & \cdot & \cdot\\
\cdot & \cdot & \cdot & \cdot & \cdot & b & a & \cdot & \cdot & \cdot & \cdot & \cdot & \cdot & \cdot & \cdot\\
\cdot & \cdot & \cdot & \cdot & \cdot & \cdot & b & a & \cdot & \cdot & \cdot & \cdot & \cdot & \cdot & \cdot\\
\cdot & \cdot & \cdot & \cdot & \cdot & b & a & \cdot & \cdot & \cdot & \cdot & \cdot & \cdot & \cdot & \cdot\\
\cdot & \cdot & \cdot & \cdot & \cdot & \cdot & b & a & \cdot & \cdot & \cdot & \cdot & \cdot & \cdot & \cdot\\
\cdot & \cdot & \cdot & \cdot & \cdot & \cdot & \cdot & \cdot & \cdot & \cdot & \cdot & b & a & \cdot & \cdot\\
\cdot & \cdot & \cdot & \cdot & \cdot & \cdot & \cdot & \cdot & \cdot & \cdot & \cdot & \cdot & \cdot & b & a
\end{array}
\right],
\end{eqnarray*}
where $b=1-a$, and we use ``$\cdot$'' to represent 0 for clarity. Solve the equation $$\vA\vpi=\vpi$$ and the probability constraint $$\sum_i \pi_i=1$$ to get the steady-state distribution of this Markov chain, and the average queue-length can be calculated by
\begin{eqnarray*}
\bar{P}_1^* &=& \sum_{i=9}^{15}\pi_i,\\
\bar{P}_2^* &=& \pi_3+\pi_4 + \pi_{12}+\pi_{13}\\
&& + 2(\pi_5+\pi_6+\pi_7+\pi_8+\pi_{14}+\pi_{15}),\\
\bar{P}_3^* &=& \pi_2+\pi_4 + \pi_6+\pi_9\\
&& + 2(\pi_7+\pi_{10} + \pi_{12}+\pi_{14})\\
&& + 3(\pi_8+\pi_{11} + \pi_{13}+\pi_{15}).
\end{eqnarray*}
It can be verified analytically or numerically that $$\bar{P}_3^*>\bar{P}_2^*>\bar{P}_1^*>\bar{P}_0^*=0$$ when $a\in(0,1)$. Fig. \ref{fig:3hopsteady} shows the average queue-lengths as a function of $a$ in a 3-hop tandem network.

\begin{figure}[!h]
\begin{center}
\includegraphics[width=2.5in]{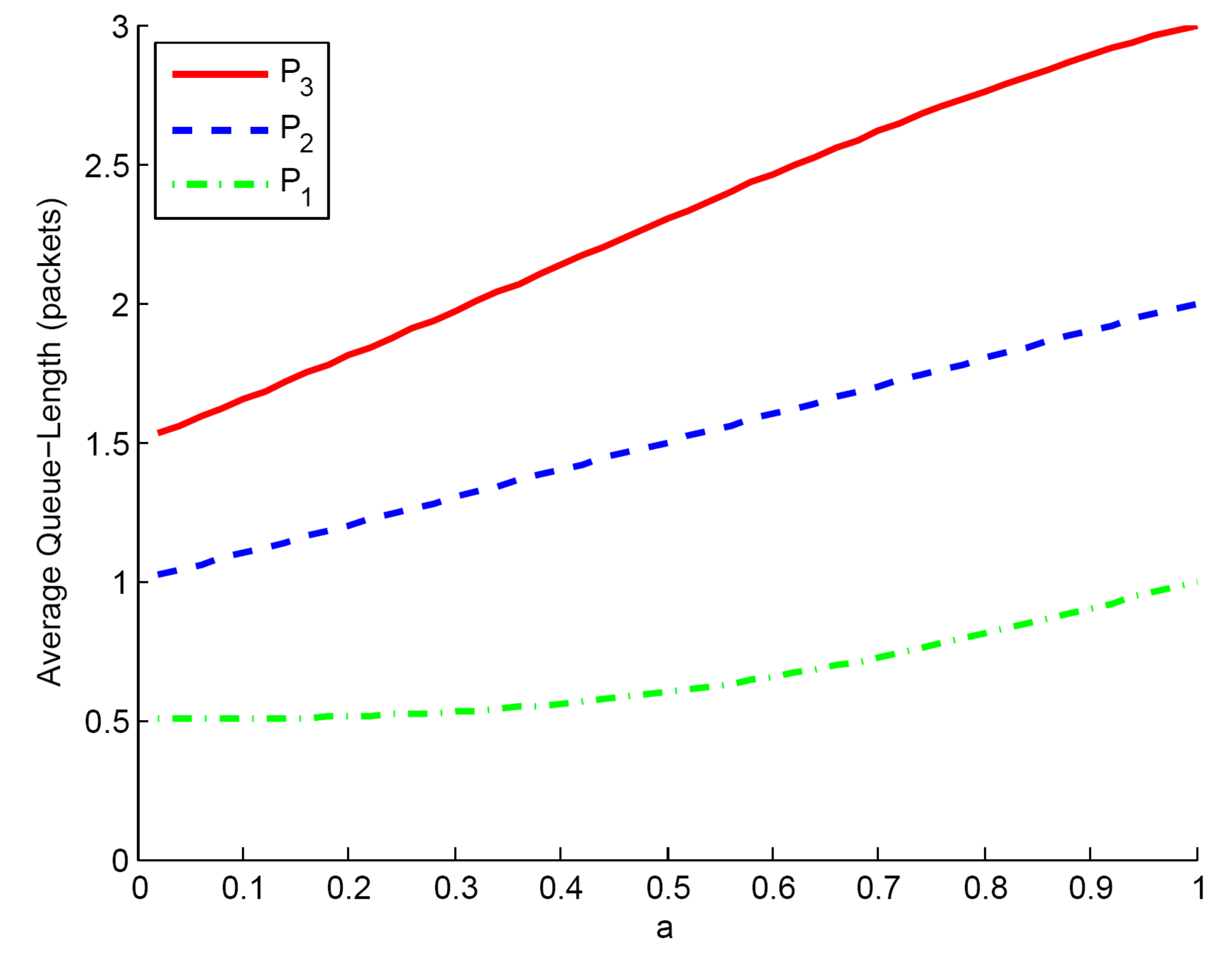}
\caption{The average queue-length in a 3-hop tandem network under Bernoulli($a$) arrival. The queue-lengths are strictly decreasing from source to destination.} \label{fig:3hopsteady}
\end{center}
\end{figure}

\emph{Remark: }As we can see from the above analysis, the dimension and the number of states in the Markov chain grow fast as $n$ increases. It is cumbersome and impractical to precisely analysis the Markov chain to prove the generalization of Lemma \ref{lmm:3hops}.

\subsection{Proof of Lemma \ref{lmm:linearNetwork}}\label{sec:linearNetwork}

To simplify the notation, we denote the nodes as node $0$ to node $n$ in a $n$-hop tandem network, where node $0$ is the destination, and node $n$ is the source. Since only one flow is assumed to exist, we omit the superscript $d$ indicating the destination of the flow.

Let $\epsilon$ be defined as $\epsilon=1-a$. By the feasibility constraint (\ref{eqn:capacityconstraint}) and the flow conservation constraint (\ref{eqn:flowconservation}), we have
\begin{eqnarray*}
r_{i,i-1}^*+r_{i-1,i}^* &\leq& 1\\
r_{i,i-1}^*-r_{i-1,i}^* &\geq& 1-\epsilon
\end{eqnarray*}
for all $i=1,2,\cdots,n$. Thus we have $r_{i-1,i}\leq\epsilon/2$ for all $i$.

Also, note that under DTBP algorithm, $|P_i[t]-P_{i-1}[t]|\leq3$ for all $i=1,2,\cdots,n-1$ and any $t$. This is because each node $i$ can get at most $2$ packets from its neighbors (2 is the degree of all intermediate nodes in a tandem network), while its neighboring nodes can send out at most $1$ more packet to other nodes. To receive one packet from both of its neighbor, the queue length at node $i$ is at least one less than that of its neighbor. This implies that the steady state distribution satisfies
\begin{eqnarray*}
\pi(\vP)\indicator_{\{|P_i-P_{i-1}|>3\}}(\vP)=0,\,\forall i = 1,2,\cdots,n-1,
\end{eqnarray*}
where $\indicator_{\{\mathcal{A}\}}(\vP)$ is the indicator function of a set $\mathcal{A}$ and $\indicator_{\{\mathcal{A}\}}(\vP)=1$ if $\vP\in\mathcal{A}$ and $0$ otherwise. Note that $P_{i-1}[t]-P_i[t]\leq3$ also holds for the source node $n$.

Thus, we have
\begin{eqnarray*}
&&\bar{P}_i^*-\bar{P}_{i-1}^*\\
&=& \sum_{\vP}\pi(\vP)(P_i-P_{i-1})\\
&=& \sum_{\vP}\pi(\vP)\indicator_{\{P_i>P_{i-1}\}}(\vP)(P_i-P_{i-1})\\
&& - \sum_{\vP}\pi(\vP)\indicator_{\{P_i<P_{i-1}\}}(\vP)(P_{i-1}-P_i)\\
&\geq_{(a)}& \sum_{\vP}\pi(\vP)\indicator_{\{P_i>P_{i-1}\}}(\vP)\\
&& - 3\sum_{\vP}\pi(\vP)\indicator_{\{P_i<P_{i-1}\}}(\vP)\\
&=_{(b)}& r_{i,i-1}^*-3r_{i-1,i}^*\\
&\geq & 1-\epsilon-2r_{i-1,i}^*\\
&\geq & 1-2\epsilon\\
&>& 0 \mbox{ if }\epsilon<\frac{1}{2},
\end{eqnarray*}
where (a) holds because $P_i-P_{i-1}\geq1$ on the set $\{P_i>P_{i-1}\}$ and $P_{i-1}-P_i\leq3$ from the discussion above, and (b) holds from the definition of the DTBP algorithm. This argument holds for all $i=1,2,\cdots,n$, thus the lemma is proven.

\subsection{Proof of Proposition \ref{thm:stability}}\label{sec:stability}
\subsubsection{Proof of Lemma~\ref{thm:attinv}}\label{sec:attinv}
First we show that $\cM$ is attractive and that the process $\vM[t]$ evolves towards $\cM$. Let $t$ be such that $\vM[t]\notin\cM$. Then $\sum_{i=1}^nM_i[t]\geq (n+1)c^{th}$. Then $\frac{\sum_{i=1}^dM_i[t]}{n}\geq c^{th}+\frac{c^{th}}{n}>c^{th}$. So there exists $j$ such that $M_j[t]>c^{th}$, which implies $\sum_{i=1}^nD_i[t]=c^{th}$. Then we have
\begin{align*}
&\sum_{i=1}^nM_i[t+1]-\sum_{i=1}^nM_i[t]\\
=&\sum_{i=1}^n\nu_i-\sum_{i=1}^nD_i[t]=\sum_{i=1}^n\nu_i-c^{th}<0.
\end{align*}
Let $T=\lceil\frac{\sum_{i=1}^nM_i[t]-(n+1)c^{th}}{c^{th}-\sum_{i=1}^n\nu_i}\rceil$. Then $\vM[t+T]\in\cM$, which proves $\forall t>0$, $\forall \vM[t]\notin\cM$, there exists $T<\infty$ such that $\vM[t+T]\in\cM$.

Next, we prove that $\cM$ is invariant in $\vM$. Define $\cM'=\bigl\{\vM\in\bR^n:\,nc^{th}\leq\sum_{i=1}^n M_i<(n+1)c^{th}\bigr\}$. We first prove that any $\vM[t]\in\cM\backslash\cM'$, $\vM[t+1]\in\cM$, i.e., $\vM[t]$ cannot escape $\cM\backslash\cM'$ without being in $\cM'$. Then we prove any $\vM[t]\in\cM'$, $\vM[t+1]\in\cM$, which shows that any solution in $\cM'$ cannot leave $\cM$.

To see the first claim holds, for any $\vM[t]\in\cM\backslash\cM'$, $\sum_{i=1}^nM_i[t]<nc^{th}$. Then $\sum_{i=1}^nM_i[t+1]\leq\sum_{i=1}^nM_i[t]+\sum_{i=1}^n\nu_i<(n+1)c^{th}$. So $\vM[t+1]\in\cM$.

Now we prove  any $\vM[t]\in\cM'$, $\vM[t+1]\in\cM$. Let $\vM[t]\in\cM'$. Then $\sum_{i=1}^nM_i[t]\geq nc^{th}$. So there exists $j$ such that $M_j[t]>c^{th}$. Thus $\sum_{i=1}^nD_i[t]=c^{th}$ follows. Together with $\sum_{i=1}^nM_i[t]<(n+1)c^{th}$, we have

\begin{equation*}
\sum_{i=1}^nM_i[t+1]=\sum_{i=1}^nM_i[t]+\sum_{i=1}^n\nu_i-c^{th}<(n+1)c^{th}.
\end{equation*}
So $\vM[t+1]\in\cM$. Hence $\cM$ is invariant in $\vM$. And the lemma follows.


\subsubsection{Proof of Proposition \ref{thm:stability}}\label{sec:stability}
As described in section~\ref{sec:regulation}, we assume that packets are sent from sources to destinations along known multi-paths. We assume that the exogenous arrival rates and the steady state link rates are available from $({\vX}[t], {\vR}[t])$, the loop-free solution such that $\vx\in\Lambda(\cN,\cL,\vc-\vepsilon)$. As $X_n^d[t]$ is the exogenous arrival of commodity $d$ at source node $n$, we assume $\bE[(X_n^d[t])^2]<\infty$.

We acquire the arrival splitting ratio from the back-pressure algorithm in steady state, i.e., the definition in~(\ref{eqn:arrival})(\ref{eqn:regulator}) becomes
\begin{eqnarray}
\mathbb{E}\left[A_{nj}^d[t]\right]&=&\mathbb{E}\left[A_n^d[t]\right]\frac{\bar{R}_{nj}^{d}}{\sum_{m\in\cN}\bar{R}_{nm}^{d}},\label{eqn:arrivalfixed}\\
S_{nj}^d&=&\mathbb{E}\left[A_n^d[t]\right]\frac{\bar{R}_{nj}^{d}}{\sum_{m\in\cN}\bar{R}_{nm}^{d}}+\delta.\label{eqn:regulatorfixed}
\end{eqnarray}
Let $H_{max}<\infty$ be the maximum path length since the paths are loop-free. $\sum_d{S}_{nj}^d$ can be made strictly less than $c$ by choosing sufficiently small $\delta$, i.e., $\delta<\frac{\epsilon}{|H_{max}||\cD|}$ guarantees that the link capacity constraint is satisfied. We assume $\vM[0]\in\bigl\{\vM:\;\sum_{d\in\cD}M_{nj}^d<(\vert\cD\vert+1)c,\;\forall(n,j)\in\cL\bigr\}$.



We define the Lyapunov function as
\begin{equation*}
V(\vQ[t])=\sum_d\sum_{n}\sum_j\bigl(Q_{nj}^d[t]\bigr)^2,
\end{equation*}
where $Q_{nj}^d[t]$ is the queue for commodity $j$ transmissions from node $n$ to node $j$ at time $t$.


For any $T>\frac{\bigl(|\cD|+1\bigr)c}{\delta}$, we consider the $T$ step drift of the Lyapunov function candidate,
\begin{eqnarray*}
\Delta V(\vQ[t])&=&\bE\biggl[V(\vQ[t+T])-V(\vQ[t])\Bigm\vert \vQ[t],\vM[t]\biggr].
\end{eqnarray*}
The following analysis before~(\ref{eqn:firsttermbound}) is similar to the analysis in~\cite{Neely03}. The queueing dynamics over time $T$ satisfies
\begin{equation}
Q_{nj}^d[t+T]\leq\Bigl(Q_{nj}^d[t]-\sum_{\tau=t}^{t+T-1}D_{nj}^d[\tau]\Bigr)^++\sum_{\tau=t}^{t+T-1}A_{nj}^d[\tau],\label{eqn:tstepqueueingdynamics}
\end{equation}
where $A_n^d[t]=X_n^d[t]\mathbbm{1}_{\{n\in\cN_{source}^d\}}+\sum_{k\in\cN}D_{kn}^d[t]$. Squaring both sides of~(\ref{eqn:tstepqueueingdynamics}) and moving terms to the left side, we have
\begin{align}
&\bigl(Q_{nj}^d[t+T]\bigr)^2-\bigl(Q_{nj}^d[t]\bigr)^2\nonumber\\
\leq&\bigl(\sum_{\tau=t}^{t+T-1}D_{nj}^d[\tau]\bigr)^2+\bigl(\sum_{\tau=t}^{t+T-1}A_{nj}^d[\tau]\bigr)^2-2Q_{nj}^d[t]\sum_{\tau=t}^{t+T-1}D_{nj}^d[\tau]\nonumber\\
&+2\sum_{\tau=t}^{t+T-1}A_{nj}^d[\tau]\Bigl(Q_{nj}^d[t]-\sum_{\tau=t}^{t+T-1}D_{nj}^d[\tau]\Bigr)^+\nonumber\\
\leq&\bigl(\sum_{\tau=t}^{t+T-1}D_{nj}^d[\tau]\bigr)^2+\bigl(\sum_{\tau=t}^{t+T-1}A_{nj}^d[\tau]\bigr)^2-2Q_{nj}^d[t]\sum_{\tau=t}^{t+T-1}D_{nj}^d[\tau]\nonumber\\
&+2\sum_{\tau=t}^{t+T-1}A_{nj}^d[\tau]Q_{nj}^d[t]\nonumber\\
=&\bigl(\sum_{\tau=t}^{t+T-1}D_{nj}^d[\tau]\bigr)^2+\bigl(\sum_{\tau=t}^{t+T-1}A_{nj}^d[\tau]\bigr)^2\nonumber\\
&+2Q_{nj}^d[t]\Bigl(\sum_{\tau=t}^{t+T-1}A_{nj}^d[\tau]-\sum_{\tau=t}^{t+T-1}D_{nj}^d[\tau]\Bigr).\nonumber
\end{align}
Then the T step drift of the Lyapunov function satisfies
\begin{align}
&\Delta V(\vQ[t])\nonumber\\
=&\bE\biggl[V(\vQ[t+T])-V(\vQ[t])\Bigm\vert \vQ[t],\vM[t]\biggr]\nonumber\\
=&\bE\biggl[\sum_d\sum_n\sum_j\bigl(Q_{nj}^d[t+T]\bigr)^2\nonumber\\
&-\sum_d\sum_n\sum_j\bigl(Q_{nj}^d[t]\bigr)^2\Bigm\vert \vQ[t],\vM[t]\biggr]\nonumber\\
\leq&\sum_d\sum_n\sum_j\bE\biggl[\bigl(\sum_{\tau=t}^{t+T-1}D_{nj}^d[\tau]\bigr)^2+\bigl(\sum_{\tau=t}^{t+T-1}A_{nj}^d[\tau]\bigr)^2\Bigm\vert \vM[t]\biggr]\nonumber\\
&+2\bE\biggl[\sum_d\sum_n\sum_jQ_{nj}^d[t]\nonumber\\
&\cdot\Bigl(\sum_{\tau=t}^{t+T-1}A_{nj}^d[\tau]-\sum_{\tau=t}^{t+T-1}D_{nj}^d[\tau]\Bigr) \Bigm\vert \vM[t]\biggr]\nonumber\\
\leq&B+2\sum_d\sum_n\sum_jQ_{nj}^d[t]\nonumber\\
&\cdot\bE\biggl[\sum_{\tau=t}^{t+T-1}A_{nj}^d[\tau]-\sum_{\tau=t}^{t+T-1}D_{nj}^d[\tau]\Bigm\vert\vM[t]\biggr].\label{eqn:firsttermbound}
\end{align}
From~(\ref{eqn:arrivalfixed}), we have
\begin{align}
&\bE\biggl[\sum_{\tau=t}^{t+T-1}A_{nj}^d[\tau]\Bigm\vert\vM[t]\biggr]\nonumber\\
=&\sum_{\tau=t}^{t+T-1}\bE\biggl[A_n^d[\tau]\frac{\bar{R}_{nj}^{d}}{\sum_{m\in\cN}\bar{R}_{nm}^{d}}\Bigm\vert\vM[t]\biggr]\nonumber\\
=&\sum_{\tau=t}^{t+T-1}\bE\biggl[X_n^d[\tau]\biggr]\frac{\bar{R}_{nj}^{d}}{\sum_{m\in\cN}\bar{R}_{nm}^{d}}\mathbbm{1}_{\{n\in\cN_{source}^d\}}\nonumber\\
&+\sum_{\tau=t}^{t+T-1}\sum_{k\in\cN}\bE\biggl[D_{kn}^d[\tau]\Bigm\vert\vM[t]\biggr]\frac{\bar{R}_{nj}^{d}}{\sum_{m\in\cN}\bar{R}_{nm}^{d}}.\label{eqn:arrivaldetailed}
\end{align}
We know that $\sum_dS_{nj}^d<1$ and $\sum_{d\in\cD}M_{nj}^d[0]<(\vert\cD\vert+1)c$. By Lemma~\ref{thm:attinv}, we have $\sum_{d\in\cD}M_{nj}^d[t]<(\vert\cD\vert+1)c$, $\forall t>0$. For each $D_{nj}^d[t]$, we have
\begin{equation}
\sum_{\tau=t}^{t+T-1}D_{nj}^d[\tau]=\sum_{\tau=t}^{t+T-1}S_{nj}^d+M_{nj}^d[t+T-1]-M_{nj}^d[t].\nonumber
\end{equation}
Moving terms to the left side and taking the absolute value of both sides,
\begin{align}
&\Bigl\vert\sum_{\tau=t}^{t+T-1}D_{nj}^d[\tau]-\sum_{\tau=t}^{t+T-1}S_{nj}^d\Bigr\vert\nonumber\\
=&\Bigl\vert M_{nj}^d[t+T-1]-M_{nj}^d[t]\Bigr|\nonumber\\
\leq&\bigl(\vert\cD\vert+1\bigr)c.\nonumber
\end{align}
As a result of sending out dummy packets in case of insufficient packets in $Q_{nj}^d[t]$, the departure process $D_{nj}^d[t]$ is deterministic given its associate token counts before time $t$. So we have
\begin{equation}
\bE\Bigl[\sum_{\tau=t}^{t+T-1}D_{nj}^d[\tau]\Bigm\vert\vM[t]\Bigr]=\sum_{\tau=t}^{t+T-1}D_{nj}^d[\tau].\nonumber
\end{equation}
Therefore
\begin{align}
&\biggl\vert\bE\Bigl[\sum_{\tau=t}^{t+T-1}D_{nj}^d[\tau]\Bigm\vert\vM[t]\Bigr]-\sum_{\tau=t}^{t+T-1}S_{nj}^d\biggr\vert\nonumber\\
\leq&\bigl(\vert\cD\vert+1\bigr)c.\label{eqn:smallrange}
\end{align}
Substituting~(\ref{eqn:arrivaldetailed}) into~(\ref{eqn:firsttermbound}), we have
\begin{align}
&\Delta V(\vQ[t])\nonumber\\
\leq&B+2\sum_d\sum_n\sum_jQ_{nj}^d[t]\cdot\nonumber\\
&\biggl\{\sum_{\tau=t}^{t+T-1}\bE\Bigl[X_n^d[\tau]\Bigr]\frac{\bar{R}_{nj}^{d}}{\sum_{m\in\cN}\bar{R}_{nm}^{d}}\mathbbm{1}_{\{n\in\cN_{source}^d\}}\nonumber\\
&+\sum_{\tau=t}^{t+T-1}\sum_{k\in\cN}\bE\Bigl[D_{kn}^d[\tau]\Bigm\vert \vM[t]\Bigr]\frac{\bar{R}_{nj}^{d}}{\sum_{m\in\cN}\bar{R}_{nm}^{d}}\nonumber\\
&-\sum_{\tau=t}^{t+T-1}\bE\Bigl[D_{nj}^d[\tau]\Bigm\vert \vM[t]\Bigr]\biggr\}\nonumber\\
=&B+2\sum_d\sum_n\sum_jQ_{nj}^d[t]\cdot\nonumber\\
&\biggl\{\sum_{\tau=t}^{t+T-1}\bE\Bigl[X_n^d[\tau]\Bigr]\frac{\bar{R}_{nj}^{d}}{\sum_{m\in\cN}\bar{R}_{nm}^{d}}\mathbbm{1}_{\{n\in\cN_{source}^d\}}\nonumber\\
&+\sum_{\tau=t}^{t+T-1}\sum_{k\in\cN}D_{kn}^d[\tau]\frac{\bar{R}_{nj}^{d}}{\sum_{m\in\cN}\bar{R}_{nm}^{d}}\nonumber\\
&-\sum_{\tau=t}^{t+T-1}\bE\Bigl[D_{nj}^d[\tau]\Bigm\vert \vM[t]\Bigr]\biggr\}\label{eqn:complete}
\end{align}
Applying~(\ref{eqn:smallrange}) and~(\ref{eqn:regulatorfixed}) to~(\ref{eqn:complete}) , then
\begin{align}
&(\ref{eqn:complete})\nonumber\\
\leq&B+2\sum_d\sum_n\sum_jQ_{nj}^d[t]\cdot\nonumber\\
&\biggl\{\sum_{\tau=t}^{t+T-1}\bE\Bigl[X_n^d[\tau]\Bigr]\frac{\bar{R}_{nj}^{d}}{\sum_{m\in\cN}\bar{R}_{nm}^{d}}\mathbbm{1}_{\{n\in\cN_{source}^d\}}\nonumber\\
&+\sum_{\tau=t}^{t+T-1}\sum_{k\in\cN}D_{kn}^d[\tau]\frac{\bar{R}_{nj}^{d}}{\sum_{m\in\cN}\bar{R}_{nm}^{d}}\nonumber\\
&-\Bigl(\sum_{\tau=t}^{t+T-1}S_{nj}^d-\bigl(\vert\cD\vert+1\bigr)c\Bigr)\biggr\}\nonumber\\
\leq&B+2\sum_d\sum_n\sum_jQ_{nj}^d[t]\biggl\{-T\delta+\bigl(\vert\cD\vert+1\bigr)c\biggl\}.\nonumber
\end{align}
Hence for any $T>\frac{\bigl(|\cD|+1\bigr)c}{\delta}$, the T step drift satisfies
\begin{equation}
\Delta V(\vQ[t])\leq B-2T\delta'\sum_d\sum_n\sum_jQ_{nj}^d[t],\label{eqn:secondtermbound}
\end{equation}
where $\delta'=T\delta-\bigl(|\cD|+1\bigr)c>0$. Applying Lemma 2 in~\cite{Neely03} to~(\ref{eqn:secondtermbound}), the proposition is proved.

\begin{biography}[{\includegraphics[width=1in,height=1.25in,clip,keepaspectratio]{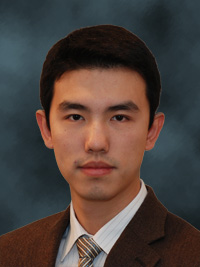}}]
  {Haozhi Xiong}(S '09) received his B.S. degree in Communication
  Engineering from Beijing University of Posts and Telecommunications,
  Beijing, China, in 2007, and the M.S. degree in Electrical and
  Computer Engineering from The Ohio State University in 2010. His
  research interests include optimal control in communication
  networks, low-delay scheduling algorithm and achitecture design, and
  control theory.
\end{biography}
\vspace{-0.5cm}
\begin{biography}[{\includegraphics[width=1in,height=1.25in,clip,keepaspectratio]{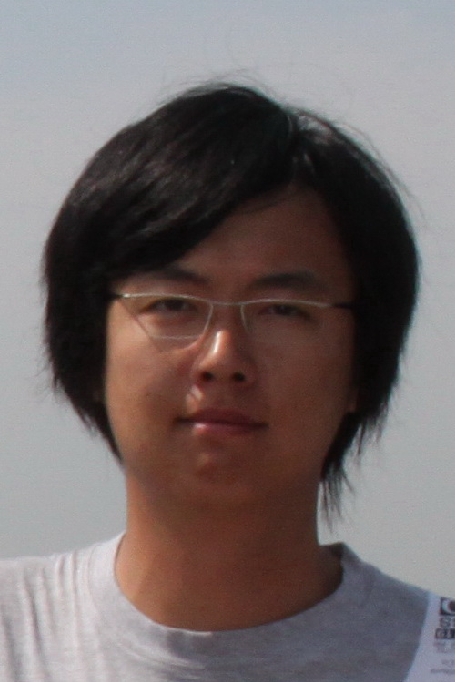}}]
{Ruogu Li}(S '10) received his B.S. degree in Electronic Engineering from Tsinghua University, Beijing, in 2007. He is currently a PhD student in Electrical and Computer Engineering at The Ohio State University. His research interests include optimal network control, wireless communication networks, low-delay scheduling scheme design and cross-layer algorithm design.
\end{biography}
\vspace{-0.5cm}
\begin{biography}[{\includegraphics[width=1in,height=1.25in,clip,keepaspectratio]{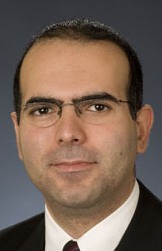}}]
  {Atilla Eryilmaz}(S '00-M '06) received his B.S. degree in
  Electrical and Electronics Engineering from Bo\=gazi\c ci
  University, Istanbul, in 1999, and the M.S. and Ph.D. degrees in
  Electrical and Computer Engineering from the University of Illinois
  at Urbana-Champaign in 2001 and 2005, respectively. Between 2005 and
  2007, he worked as a Postdoctoral Associate at the Laboratory for
  Information and Decision Systems at the Massachusetts Institute of
  Technology. Since 2007, he is an Assistant Professor of Electrical
  and Computer Engineering at The Ohio State University. He received
  the NSF-CAREER and the Lumley Research Awards in 2010, and his
  research interests include: design and analysis for communication
  networks, optimal control of stochastic networks, optimization
  theory, distributed algorithms, stochastic processes, and
  information theory.
\end{biography}
\vspace{-0.5cm}
\begin{biography}[{\includegraphics[width=1in,height=1.25in,clip,keepaspectratio]{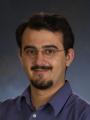}}] 
  {Eylem Ekici} received the BS and MS degrees in computer engineering
  from Bo\=gazi\c ci University, Istanbul, Turkey, in 1997 and 1998,
  respectively, and the PhD degree in electrical and computer
  engineering from Georgia Institute of Technology, Atlanta, in
  2002. Currently, he is an associate professor with the Department of
  Electrical and Computer Engineering, The Ohio State University. His
  current research interests include cognitive radio networks,
  nano-scale networks, vehicular communication systems, and wireless
  sensor networks, with a focus on routing and medium access control
  protocols, resource management, and analysis of network
  architectures and protocols. He is an associate editor of the
  IEEE/ACM Transactions on Networking, Computer Networks Journal
  (Elsevier), and the ACM Mobile Computing and Communications
  Review. He is a member of the IEEE.
\end{biography}
\end{document}